\newtheorem{defin}{Definition}[section]
\newtheorem{prop-def}[defin]{Proposition-Definition}
\newtheorem{lem}[defin]{Lemma}
\newtheorem{thm}[defin]{Theorem}
\newtheorem{remark}[defin]{Remark}
\newtheorem{cor}[defin]{Corollary}
\newtheorem{ex}[defin]{Example}
\newtheorem{conv}[defin]{Convention}
\newcommand{\e}{ \hfill $\diamond$}
\begin{document}

\title[Kurosh Decomposition]{Reading Off Kurosh Decompositions}%
\author{L.Markus-Epstein}\footnote{Supported in part at the Technion by a fellowship of the Israel Council for Higher Education}%
\address{Department of Mathematics \\
Technion \\
Haifa 32000, Israel}%
\email{epstin@math.biu.ac.il}%


\begin{abstract}

Geometric methods proposed by Stallings \cite{stal} for treating
finitely generated subgroups of free groups  were successfully
used to solve a wide collection of decision problems for free
groups and their subgroups \cite{b-m-m-w, kap-m, mar_meak, m-s-w,
mvw, rvw, ventura}.

In the present paper we employ the generalized Stallings' folding
method  developed  in \cite{m-foldings} to introduce a  procedure,
which given a subgroup $H$ of a free product of finite groups
reads off its Kurosh decomposition from the subgroup graph of $H$.

\end{abstract}
\maketitle


\section{Introduction}
\label{sec:Introduction}

The celebrated theorem of Kurosh describes subgroups of free
products.
\begin{thm}[Kurosh Subgroup Theorem \cite{l_s}]
Let $G$ be a free product of groups $G_i$, where $i$ runs over an
index set $I$. Let $H$ be a subgroup of $G$. Then $H=F \ast ( \ast
g_jH_jg_j^{-1})$ is a free product of a free group $F$ together
with groups that are conjugates of subgroups $H_j$ of the free
factors $G_i$ of $G$.
\end{thm}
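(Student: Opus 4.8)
The plan is to prove the theorem by realizing the free product decomposition of $G$ geometrically — as an action on a tree — and then restricting that action to $H$ and reading off the resulting decomposition.

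First I would construct the Bass--Serre tree attached to the decomposition $G=\ast_{i\in I}G_i$. Encode this decomposition by a graph of groups $Y$ shaped like a star: a central vertex $v_0$ carrying the trivial group, and for each $i\in I$ one edge $e_i$ joining $v_0$ to a vertex $v_i$ whose group is $G_i$, all edge groups trivial. Then $\pi_1(Y,v_0)\cong G$, and the associated tree $T$ carries a $G$-action in which every edge stabilizer is trivial, the stabilizers of the lifts of $v_0$ are trivial, and the stabilizers of the lifts of each $v_i$ are exactly the conjugates $gG_ig^{-1}$, $g\in G$. Now restrict the $G$-action on $T$ to $H$. Since $T$ is a tree and, after subdividing if necessary, $H$ acts without inversions, the structure theorem of Bass--Serre theory identifies $H$ with the fundamental group of the quotient graph of groups $\mathcal{G}$ for the $H$-action on $T$. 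In $\mathcal{G}$ every edge group embeds in an edge stabilizer of $T$, hence is trivial; and every vertex group is the $H$-stabilizer of some vertex of $T$, hence is either trivial or of the form $H\cap g G_{i}g^{-1}$ for suitable $i$ and $g\in G$. Setting $H_j:=g_j^{-1}\bigl(H\cap g_jG_{i(j)}g_j^{-1}\bigr)g_j\le G_{i(j)}$, the nontrivial vertex groups are precisely the $g_jH_jg_j^{-1}$.

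Finally, a graph of groups all of whose edge groups are trivial has fundamental group equal to the free product of its vertex groups together with a free group $F$: pick a spanning tree of the underlying graph, along which all the vertex groups amalgamate freely, and let the remaining edges furnish free generators, so $F$ is free of rank equal to the first Betti number of the graph. This yields $H=F\ast\bigl(\ast_j g_jH_jg_j^{-1}\bigr)$ with each $H_j$ a subgroup of a free factor $G_i$, as claimed.

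The main obstacle is not conceptual but organizational: one must set up the Bass--Serre dictionary carefully — the choice of a fundamental domain and spanning tree for $H\backslash T$, the exact description of the stabilizers, and the harmless reduction to an action without inversions — and one must invoke the form of Bass--Serre theory valid for arbitrary, possibly infinite, graphs of groups so that the argument covers infinite $I$ and infinite-index $H$. An alternative, purely combinatorial route is Kurosh's original one: induct on the syllable length of normal forms in the free product and perform a Reidemeister--Schreier-type rewriting of a transversal for $H$; there the delicate point is bookkeeping the interaction of the syllables of coset representatives, and the tree-theoretic proof above is essentially a structural repackaging of that computation.
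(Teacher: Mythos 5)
Your Bass--Serre argument is correct and essentially complete in outline: the star-shaped graph of groups with trivial central vertex group and trivial edge groups does have fundamental group $\ast_{i\in I} G_i$; the induced action of $H$ on the Bass--Serre tree has trivial edge stabilizers and vertex stabilizers that are trivial or of the form $H\cap gG_ig^{-1}$; and a (possibly infinite) graph of groups with trivial edge groups splits as the free product of its vertex groups with a free group whose rank is the first Betti number of the underlying graph. (One small simplification: the $G$-action on the Bass--Serre tree is without inversions by construction, so no subdivision is actually needed.) Note, however, that the paper does not prove this theorem at all --- it is quoted from Lyndon--Schupp as classical background, and the paper's own contribution is an effective version for finitely generated subgroups of free products of \emph{finite} groups. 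There the decomposition is extracted from the Stallings-type subgroup graph $(\Gamma(H),v_0)$: each monochromatic component $C$ that is a cover of a factor $G_i$ contributes a conjugate factor $g_v Lab(C,v)g_v^{-1}$, the splitting is verified directly via the Normal Form Theorem together with the free-product recognition criterion (Lemma IV.1.7 of Lyndon--Schupp), and the residual free factor is read off a spanning tree of the graph that remains. Your route buys full generality (arbitrary index set $I$, arbitrary $H$) at the price of no effectivity; the paper's route buys a quadratic-time algorithm but only in its restricted setting. The two are compatible: the subgroup graph is in effect a combinatorial stand-in for the quotient graph of groups of the $H$-action on the tree.
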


In this issue one can ask the following algorithmic question.
\emph{Given a subgroup $H$} (for instance, by a finite set of
generators) \emph{of a free product $G=\ast G_i$, find its Kurosh
decomposition $H=F \ast ( \ast g_jH_jg_j^{-1})$ efficiently}.

Below we solve this algorithmic problem (we call it the
\emph{Kurosh decomposition problem})  for finitely generated
subgroups of free products of finite groups, employing graph
theoretical methods developed by the author in \cite{m-foldings}.
More precisely, we introduce an algorithm which \emph{reads off}
the decomposition of a subgroup from its subgroup graph.

This approach goes back to the remarkable paper of Stallings
\cite{stal}, where finitely generated subgroups of free groups
were canonically represented by finite labelled graphs. Later on
this method was successfully applied to solve various algorithmic
problems in free groups \cite{b-m-m-w, kap-m, mar_meak, m-s-w,
mvw, rvw, ventura}, providing mostly polynomial algorithms.

In \cite{m-foldings} Stallings method, or so called
\emph{Stallings' folding algorithm}, was generalized to the class
of amalgams of finite groups. We refer to this generalized
algorithm as the \emph{generalized Stallings' folding algorithm}.
In the current paper our methods are restricted to the case of
free products of finite groups.  The description  of the
generalized Stallings' algorithm (restricted to the case of free
products of finite groups) is included in the Appendix.

Note that the graph constructed by Stallings' folding algorithm
for $S \leq FG(X)$ is the Geodesic core of the coset Cayley graph
of $FG(X)$ relative to $S$, that is the the union of all closed
geodesic paths starting at the basepoint $S \cdot 1$. The
resulting graph $(\Gamma(H),v_0)$ constructed by the generalized
Stallings' folding algorithm for $H \leq G$ is a sort of a
\emph{core graph} as well (see \cite{m-foldings} for more
details). More precisely, it is the \emph{Normal core} of  the
coset Cayley graph of $G$ relative to $H$: the union of all closed
normal paths starting at the basepoint $H \cdot 1$.
 Another example
of core construction can be found in \cite{c_turner}, where
Collins and Turner use a topological approach to study
automorphisms of free products.

The paper is organized as follows. We start (Section
\ref{sec:Preliminaries}) by fixing the notation and by brief
recalling of some known results which are essential for the
current paper.   Readers familiar with free products, normal
(reduced) words  and labelled graphs can skip it. The next section
(Section \ref{sec:SubgroupGraphs}) presents a summary of the
results from \cite{m-foldings} concerning subgroup graphs which
are essential for the solution of Kurosh decomposition problem.

Section \ref{subsection: TheBasicStep} presents  the basic step of
our ``reading'' procedure described along with the proof of
Theorem \ref{cor: alg_Kurosh} (Section \ref{subsection: The
Reading off Kurosh Decomposition}). The complexity analysis of
this algorithm shows that it is quadratic in the size of the
input. The algorithm application is demonstrated in Example
\ref{ex:KuroshDecomp} (Section \ref{subsection: The Reading off
Kurosh Decomposition}).


\section{Acknowledgments}

I wish to deeply thank to my PhD advisor Prof. Stuart W. Margolis
for introducing me to this subject, for his help  and
encouragement throughout  my work on the thesis. I owe gratitude
to Prof. Arye Juhasz for his suggestions and many useful comments
during the writing of this paper. I gratefully acknowledge a
partial support at the Technion by a fellowship of the Israel
Council for Higher Education.


\section{Preliminaries}
\label{sec:Preliminaries}

\subsection*{Free Products}
Throughout this paper, we assume that $G=G_1 \ast G_2$ is a free
product of finite groups $G_1$ and $G_2$ where
\begin{align}
G_1=gp\langle X_1|R_1\rangle, \ \ G_2=gp\langle X_2|R_2\rangle \ \
{\rm such \ that} \ \ X_1^{\pm} \cap X_2^{\pm}=\emptyset.
\tag{\text{$\ast$}}
\end{align}
 Thus
\begin{align}
{ G=gp\langle X_1,X_2 | R_1, R_2 \rangle. } \tag{\text{$\ast
\ast$}}
\end{align}
We denote $X=X_1 \cup X_2$ and put $H$ to be a finitely generated
subgroup of $G$.

Elements of $G=gp \langle X |R \rangle$ are equivalence classes of
words. However it is customary to blur the distinction between a
word $u$ and the equivalence class containing $u$. We will
distinguish between  them by using different equality signs:
\fbox{``$\equiv$''} for the equality of two words and
\fbox{``$=_G$''} to denote the equality of two elements of $G$,
that is the equality of two equivalence classes.


\subsection*{Normal Forms} Let $G=G_1 \ast_{A} G_2$.

A word $g_1g_2 \cdots g_n \in G$ ($n \geq 0$) is in \emph{normal
form} (or, more customary, it is a \emph{normal word}) if the
following holds
\begin{enumerate}
    \item [(1)] $g_i \neq_G 1$ lies in either $G_1$ or $G_2$,
    \item [(2)] $g_i$ and $g_{i+1}$ are in different factors of $G$,
\end{enumerate}
We call the sequence $(g_1, g_2, \ldots, g_n)$ a
 \emph{normal decomposition} of the element $g \in G$, where $g=_G g_1g_2 \cdots g_n$.

By the Normal Form Theorem for Free Products (Theorem IV.1.2 in
\cite{l_s}), the number $n$ is uniquely determined for a given
element $g$ of $G$ and it is called the {\it syllable length} of
$g$.


\subsection*{Labelled graphs}
Below we follow the notation of \cite{gi_sep, stal}.

A graph $\Gamma$ consists of two sets $E(\Gamma)$ and $V(\Gamma)$,
and two functions $E(\Gamma)\rightarrow E(\Gamma)$  and
$E(\Gamma)\rightarrow V(\Gamma)$: for each $e \in E$ there is an
element $\overline{e} \in E(\Gamma)$ and an element $\iota(e) \in
V(\Gamma)$, such that $\overline{\overline{e}}=e$ and
$\overline{e} \neq e$.

The elements of $E(\Gamma)$ are called \textit{edges}, and an $e
\in E(\Gamma)$ is a \emph{direct edge} of $\Gamma$, $\overline{e}$
is the \emph{reverse (inverse) edge} of $e$.

The elements of $V(\Gamma)$ are called \textit{vertices},
$\iota(e)$ is the \emph{initial vertex} of $e$, and
$\tau(e)=\iota(\overline{e})$ is the \emph{terminal vertex} of
$e$. We call them the \emph{endpoints} of the edge $e$.

A  \emph{path of length $n$} is  a sequence of $n$ edges $p=e_1
\cdots  e_n $ such that $v_i=\tau(e_i)=\iota(e_{i+1})$ ($1 \leq
i<n$).  We call $p$ a \emph{path from $v_0=\iota(e_1)$ to
$v_n=\tau(e_n)$}. The \emph{inverse} of the path $p$ is
$\overline{p}=\overline{e_n} \cdots \overline{e_1}$. A path of
length 0 is the \emph{empty path}.

We say that the graph $\Gamma$ is \emph{connected} if $V(\Gamma)
\neq \emptyset$ and any two vertices  are joined by a path. The
path $p$ is \emph{closed} if $\iota(p)=\tau(p)$, and it is
\emph{freely reduced} if $e_{i+1} \neq \overline{e_i}$ ($1 \leq i
<n$). $\Gamma$ is a \emph{tree} if it is a connected graph and
every closed freely reduced path in $\Gamma$ is empty.

A \emph{subgraph} of $\Gamma$ is a graph $C$  such that $V(C)
\subseteq V(\Gamma)$ and $E(C) \subseteq E(\Gamma)$. In this case,
by abuse of language, we write $C\subseteq \Gamma$.
Similarly, whenever we write $\Gamma_1 \cup \Gamma_2$ or $\Gamma_1
\cap \Gamma_2$,  we always mean that the set operations are, in
fact,  applied to the vertex sets and the edge sets of the
corresponding graphs.

A \emph{labelling} of $\Gamma$ by the set $X^{\pm}$ is a function
$$lab: \: E(\Gamma)\rightarrow X^{\pm}$$ such that for each $e \in
E(\Gamma)$, $lab(\overline{e}) \equiv (lab(e))^{-1}$.

The last equality enables one, when representing the labelled
graph $\Gamma$ as a directed diagram,  to represent only
$X$-labelled edges, because $X^{-1}$-labelled edges can be deduced
immediately from them.

A graph with a labelling function is called a \emph{labelled (with
$X^{\pm}$) graph}.  The only graphs considered in the present
paper are labelled graphs.

A labelled graph is called \emph{well-labelled} if
$$\iota(e_1)=\iota(e_2), \; lab(e_1) \equiv lab(e_2)\ \Rightarrow \
e_1=e_2,$$ for each pair of edges $e_1, e_2 \in E(\Gamma)$. See
Figure \ref{fig: labelled, well-labelled graphs}.

\begin{figure}[!h]
\psfrag{a }[][]{$a$} \psfrag{b }[][]{$b$} \psfrag{c }[][]{$c$}
\psfrag{e }[][]{$e_1$}
\psfrag{f }[][]{$e_2$}
\psfragscanon \psfrag{G }[][]{{\Large $\Gamma_1$}}
\psfragscanon \psfrag{H }[][]{{\Large $\Gamma_2$}}
\psfragscanon \psfrag{K }[][]{{\Large $\Gamma_3$}}
\includegraphics[width=\textwidth]{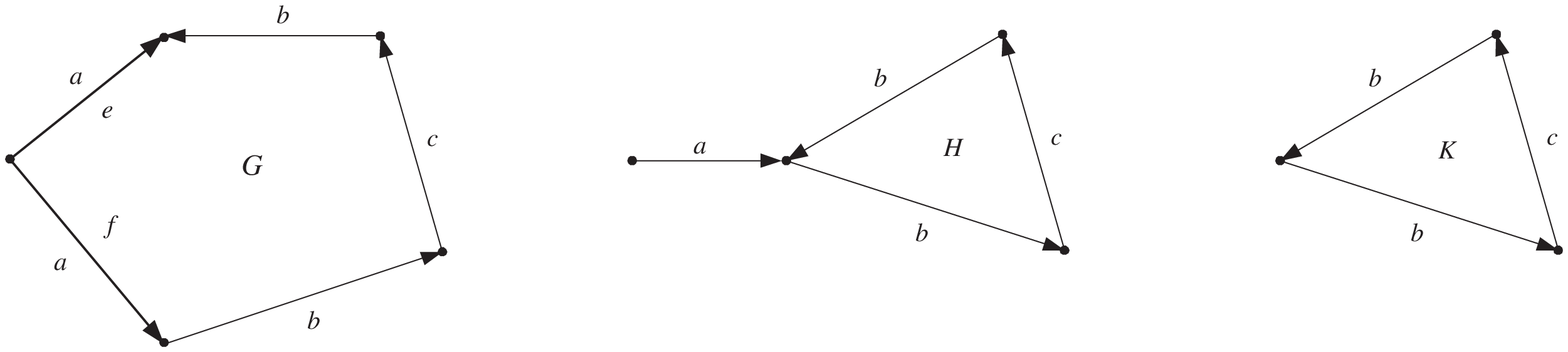}
\caption[The construction of $\Gamma(H_1)$]{ \footnotesize {The
graph $\Gamma_1$ is labelled with $\{a,b,c\} ^{\pm}$, but it is
not well-labelled. The graphs $\Gamma_2$ and $\Gamma_3$ are
well-labelled with $\{a,b,c\} ^{\pm}$.}
 \label{fig: labelled, well-labelled graphs}}
\end{figure}

If a finite graph $\Gamma$ is not well-labelled then a process of
iterative identifications of each pair  $\{e_1,e_2\}$ of distinct
edges with the same initial vertex and the same label to a single
edge yields a well-labelled graph. Such identifications are called
\emph{foldings}, and the whole process is known as the process of
\emph{Stallings' foldings} \cite{b-m-m-w, kap-m, mar_meak, m-s-w}.
 Thus the graph $\Gamma_2$ on Figure
\ref{fig: labelled, well-labelled graphs}  is obtained from the
graph $\Gamma_1$ by folding the edges $e_1$ and $e_2$ to a single
edge labelled by $a$.

Notice that the graph $\Gamma_3$ is obtained from the graph
$\Gamma_2$ by removing the edge labelled by $a$ whose initial
vertex has degree 1. Such an edge is called a \emph{hair}, and the
above procedure is used to be called \emph{``cutting hairs''}.


The label of a path $p=e_1e_2 \cdots e_n$ in $\Gamma$, where $e_i
\in E(\Gamma)$, is the word $$lab(p) \equiv lab(e_1)\cdots
lab(e_n) \in (X^{\pm})^*.$$ Notice that the label of the empty
path is the empty word. As usual, we identify the word $lab(p)$
with the corresponding element in $G=gp\langle X | R \rangle$. We
say that $p$ is   a \emph{normal path} (or $p$ is a path in
\emph{normal form}) if $lab(p)$ is a normal word.

If $\Gamma$ is a well-labelled graph then a path $p$ in $\Gamma$
is freely reduced if and only if $lab(p)$ is a freely reduced
word.
Otherwise $p$  can be converted into a freely reduced path $p'$ by
iteratively removing of the subpaths   $e\overline{e}$
(\emph{backtrackings}) (\cite{mar_meak, kap-m}).  Thus
$$\iota(p')=\iota(p), \ \tau(p')=\tau(p) \ \; {\rm and } \ \; lab(p)=_{FG(X)} lab(p'),$$ where \fbox{$FG(X)$} is a free group
with a free basis $X$. We say that $p'$ is obtained from $p$ by
\emph{free reductions}.


If $v_1,v_2 \in V(\Gamma)$ and $p$ is a path in $\Gamma$ such that
$$\iota(p)=v_1, \ \tau(p)=v_2 \ {\rm and } \ lab(p)\equiv u,$$
then, following the automata theoretic notation, we simply write
\fbox{$v_1 \cdot u=v_2$} to summarize this situation, and  say
that the word $u$ is \emph{readable} at $v_1$ in $\Gamma$.

A pair \fbox{$(\Gamma, v_0)$}  consisting  of the graph $\Gamma$
and the \emph{basepoint} $v_0$ (a distinguished vertex of the
graph $\Gamma$) is called  a \emph{pointed graph}.

Following the notation of Gitik (\cite{gi_sep}) we denote the set
of all closed paths in $\Gamma$ starting at $v_0$  by
\fbox{$Loop(\Gamma, v_0)$},  and the image of $lab(Loop(\Gamma,
v_0))$ in $G=gp\langle X | R \rangle$  by \fbox{$Lab(\Gamma,
v_0)$}. More precisely,
$$Loop(\Gamma, v_0)=\{ p \; | \; p {\rm  \ is \ a \ path \ in \ \Gamma \
with} \ \iota(p)=\tau(p)=v_0\}, $$
$$Lab(\Gamma,v_0)=\{g \in G \; | \; \exists p \in Loop(\Gamma,
v_0) \; : \; lab(p)=_G g \}.$$


It is easy to see that $Lab(\Gamma, v_0)$ is a subgroup of $G$
(\cite{gi_sep}). Moreover, $Lab(\Gamma,v)=gLab(\Gamma,u)g^{-1}$,
where $g=_G lab(p)$, and $p$ is a path in $\Gamma$ from $v$ to $u$
(\cite{kap-m}).
If $V(\Gamma)=\{v_0\}$ and $E(\Gamma)=\emptyset$ then we assume
that $H=\{1\}$.

We say that $H=Lab(\Gamma, v_0)$ is \emph{the subgroup of $G$
determined by the graph $(\Gamma,v_0)$}.  Thus any pointed graph
labelled by $X^{\pm}$, where $X$ is a generating set of a group
$G$, determines a subgroup of $G$. This argues the use of the name
\emph{subgroup graphs} for such graphs.

\subsection*{Morphisms of Labelled Graphs} \label{sec:Morphisms
Of Well-Labelled Graphs}

Let $\Gamma$ and $\Delta$ be graphs labelled with $X^{\pm}$. The
map $\pi:\Gamma \rightarrow \Delta$ is called a \emph{morphism of
labelled graphs}, if $\pi$ takes vertices to vertices, edges to
edges, preserves labels of direct edges and has the property that
$$ \iota(\pi(e))=\pi(\iota(e)) \ {\rm and } \
\tau(\pi(e))=\pi(\tau(e)), \ \forall e\in E(\Gamma).$$
An injective morphism of labelled graphs is called an
\emph{embedding}. If $\pi$ is an embedding then we say that the
graph $\Gamma$ \emph{embeds} in the graph $\Delta$.


A \emph{morphism of pointed labelled graphs} $\pi:(\Gamma_1,v_1)
\rightarrow (\Gamma_2,v_2)$  is a morphism of underlying labelled
graphs $ \pi: \Gamma_1\rightarrow \Gamma_2$ which preserves the
basepoint $\pi(v_1)=v_2$. If $\Gamma_2$ is well-labelled then
there exists at most one such morphism (\cite{kap-m}).


\begin{remark}[\cite{kap-m}] \label{unique isomorphism}
{\rm  If two pointed well-labelled (with $X^{\pm}$) graphs
$(\Gamma_1,v_1)$ and $(\Gamma_2,v_2)$  are isomorphic, then there
exists a unique isomorphism $\pi:(\Gamma_1,v_1) \rightarrow
(\Gamma_2,v_2)$. Therefore $(\Gamma_1,v_1)$ and $(\Gamma_2,v_2)$
can be identified via $\pi$. In this case we sometimes write
$(\Gamma_1,v_1)=(\Gamma_2,v_2)$.} \e
\end{remark}

The notation $\Gamma_1=\Gamma_2$ means that there exists an
isomorphism between these two graphs. More precisely, one can find
$v_i \in V(\Gamma_i)$ ($i \in \{1,2\}$) such that
$(\Gamma_1,v_1)=(\Gamma_2,v_2)$ in the sense of Remark~\ref{unique
isomorphism}.


\section{Subgroup Graphs}
\label{sec:SubgroupGraphs}

The current section is devoted to the discussion on subgroup
graphs constructed by the generalized Stallings' folding
algorithm. The main results of \cite{m-foldings} concerning these
graphs, which are essential for the present paper, are summarized
in terms of free products in Theorem~\ref{thm: properties of
subgroup graphs} below. The notion of reduced precover is
explained right after the theorem along the rest of this section.


\begin{thm} \label{thm: properties of subgroup graphs}
Let $H=\langle h_1, \cdots, h_k \rangle$ be a finitely generated
subgroup of a  free product of finite groups $ G=G_1 \ast  G_2$.

Then there is an algorithm (\underline{the generalized Stallings'
folding  algorithm}) which  constructs a finite labelled graph
$(\Gamma(H),v_0)$ with the following properties:
\begin{itemize}
\item[(1)] $ {Lab(\Gamma(H),v_0)}= {H}. $

\item[(2)] Up to isomorphism, $(\Gamma(H),v_0)$ is  a unique
\underline{reduced precover} of $G$ determining $H$.

\item[(3)] Let $m$  be the sum of the lengths of words $h_1,
\ldots h_n$. Then the algorithm computes $(\Gamma(H),v_0)$ in time
$O(m^2)$.
Moreover, $|V(\Gamma(H))|$ and  $|E(\Gamma(H))|$ are proportional
to $m$.

\end{itemize}
\end{thm}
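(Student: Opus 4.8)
The plan is to establish all three items at once, by analysing the graph produced by the \emph{generalized Stallings' folding algorithm} and then identifying it with a canonical subgraph of a coset graph. Recall that a \emph{reduced precover} of $G=G_1\ast G_2$ determining a subgroup $K$ is a connected, well-labelled, pointed graph $(\Gamma,v_0)$ with $Lab(\Gamma,v_0)=K$ such that (a) for $i=1,2$ every maximal connected subgraph spanned by the $X_i^{\pm}$-labelled edges is either a single vertex or a complete copy of $\mathrm{Cayley}(G_i,X_i)$, and (b) $\Gamma$ has no hair, that is, no vertex of degree one other than possibly $v_0$. The algorithm starts from the wedge $\Delta_0$ of $k$ loops at one vertex $v_0$, the $i$-th loop labelled by $h_i$ (taken in normal form); then $Lab(\Delta_0,v_0)=\langle h_1,\dots,h_k\rangle=H$ and $\Delta_0$ has $O(m)$ vertices and edges. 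It then repeats, in any order, the three elementary moves until none applies: a \emph{folding} (identify two edges with a common initial vertex and a common label); a \emph{$G_i$-completion} (adjoin at a vertex of a nontrivial $X_i^{\pm}$-component a loop labelled by a relator of $R_i$, then fold, so that the component grows towards a copy of $\mathrm{Cayley}(G_i,X_i)$); and \emph{cutting a hair}. When the process stabilises the output is, by the very absence of applicable moves, a reduced precover determining whatever subgroup it carries.

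The first thing to verify is that every move preserves $Lab(\cdot,v_0)$, which already yields item (1). A folding does not change the image in $G$ of the set of loop-labels: by the standard Stallings argument it is either a homotopy equivalence of graphs or it kills a loop whose label is freely trivial. Cutting a hair changes neither the set of freely reduced closed paths at $v_0$ nor their labels, since a hair cannot occur on such a path. A $G_i$-completion is realised by adjoining, at a vertex $v$ (reachable from $v_0$, as the graph stays connected), a loop whose label is a relator $r\in R_i$ — which is trivial in $G$, so the enlarged loop-label set has the same image in $G$ — followed by foldings, which were just handled. Hence $Lab(\Gamma(H),v_0)=Lab(\Delta_0,v_0)=H$.

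The crux is that the process terminates and, more precisely, that the graph never exceeds $O(m)$ vertices and edges; this gives the size statement in (3) and makes ``unique up to isomorphism'' in (2) substantive. I would argue it by a potential estimate: foldings and hair-cuttings strictly decrease the number of edges; a $G_i$-completion enlarges only the one $X_i^{\pm}$-component on which it acts, a component can be completed only finitely often and to at most $|G_i|$ vertices, and the number of nontrivial $X_i^{\pm}$-components is bounded by the number of $X_i^{\pm}$-edges, which — since foldings only merge and completions add a bounded amount — stays $O(m)$ with constant depending only on $|G_1|,|G_2|$. Thus the moves cannot continue indefinitely, and at every stage $|V|,|E|=O(m)$. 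The time bound $O(m^2)$ then follows as usual: there are $O(m)$ moves, and each can be located and performed in time $O(m)$ on a graph of size $O(m)$. I expect this termination-with-linear-size bound to be the main obstacle; the remaining ingredients are either bookkeeping or routine transcriptions of the free-group arguments of \cite{stal, kap-m, mar_meak, b-m-m-w}.

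Finally, for the uniqueness in (2) — which also shows the output is independent of the generating set and of the order of the moves — I would proceed as follows. Given any reduced precover $(\Gamma,v_0)$ with $Lab(\Gamma,v_0)=H$, define $\varphi\colon V(\Gamma)\to\{Hg:g\in G\}$ by $\varphi(v)=H\cdot lab(p_v)$ for any path $p_v$ from $v_0$ to $v$; this is well defined because two such paths differ by a loop at $v_0$, whose label lies in $H$ by item (1). One checks — this is where condition (a) of a precover is used — that $\varphi$ extends to an embedding of pointed labelled graphs of $(\Gamma,v_0)$ into the coset Cayley graph of $G$ relative to $H$, and — using (a) to read normal words freely and (b) to exclude extraneous edges — that the image of $\varphi$ is exactly the \emph{normal core}, the union of all closed normal paths based at $H\cdot 1$. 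Since the normal core is a canonical subgraph of the coset Cayley graph, any two reduced precovers determining $H$ have the same image, hence are isomorphic; by Remark~\ref{unique isomorphism} the isomorphism is unique and basepoint-preserving. In particular the algorithm's output $(\Gamma(H),v_0)$ is this canonical normal core, which establishes (2) and closes the argument.
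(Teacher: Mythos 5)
First, a point of orientation: the paper does not actually prove this theorem --- it is imported verbatim from \cite{m-foldings}, and the Appendix only restates the algorithm (note its Step~4, which removes \emph{redundant} monochromatic components). So there is no in-paper proof to match yours against; what can be judged is whether your sketch would establish the statement, and there I see a genuine gap centred on what ``reduced'' means. You define a reduced precover by (a) every monochromatic component is a cover of $G_i$ and (b) no hairs. But in a precover every monochromatic component is $X_i^{\pm}$-saturated, so (b) is nearly vacuous, and with your definition item (2) is simply false: take $G=\mathbb{Z}_2\ast\mathbb{Z}_3$ and $H=\langle a\rangle$; the graph consisting of $Cayley(\mathbb{Z}_2)$ alone and the graph obtained from it by gluing a copy of $Cayley(\mathbb{Z}_3)$ at one vertex are both connected precovers with no hairs determining $H$, and they are not isomorphic. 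The paper's reducedness condition is precisely the exclusion of such \emph{redundant} components ($Lab(C,v)=\{1\}$, $|VB(C)|\leq 1$, $v_0\notin VM(C)$), and the algorithm needs the corresponding removal step --- absent from your list of elementary moves --- for the output to be canonical. Your uniqueness argument inherits this problem at its key step: the assertion that the image of $\varphi$ is ``exactly the normal core'' is exactly where reducedness in the paper's sense must be invoked (to show every vertex and edge of the precover lies on a closed \emph{normal} path at $v_0$; a redundant component contributes none), and you offer only the parenthetical ``using (b) to exclude extraneous edges,'' which your (b) cannot deliver. The converse inclusion --- that every closed normal path of $Cayley(G,H)$ at $H\cdot 1$ lifts into an arbitrary reduced precover determining $H$ --- is also asserted rather than argued, and it is the other half of the heart of (2).

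The rest of the sketch is broadly in the right spirit and consistent with what the paper attributes to \cite{m-foldings}: preservation of $Lab(\cdot,v_0)$ under foldings, hair-cutting and relator/Cayley-graph completion (the paper's Lemma~\ref{glue_cayley graph to an edge}) gives (1), and the potential argument for termination and linear size is plausible, though you should also check that you preserve $Lab$ when deleting redundant components, and that completions followed by foldings cannot merge components in a way that forces unboundedly many re-completions. As it stands, however, the proposal proves uniqueness for the wrong class of graphs, so (2) --- the substantive claim --- is not established.
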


Throughout the present paper the notation \fbox{$(\Gamma(H),v_0)$}
is always used for the finite labelled graph  constructed by the
generalized Stallings' folding  algorithm for a finitely generated
subgroup $H$ of a free product of finite groups $G=G_1 \ast G_2$.


\subsection*{Precovers} Roughly speaking,
\emph{precovers} are subgroup graphs, corresponding to subgroups
of amalgamated products, with a very particular structure. This
notion was defined by Gitik in \cite{gi_sep} and actively employed
by the author in \cite{m-foldings, m-algI, m-algII}. Below we
define precovers in term of free products and recall some of their
properties which are essential to the present paper.

Let $\Gamma$ be a graph well-labelled with $X^{\pm}$, where $X=X_1
\cup X_2$ is the generating set of $G=G_1 \ast  G_2$  given by
($\ast$) and ($\ast \ast$).
We view $\Gamma$ as a two colored graph: one color for each one of
the generating sets $X_1$ and $X_2$ of the factors $G_1$ and
$G_2$, respectively.

The vertex $v \in V(\Gamma)$ is called \emph{$X_i$-monochromatic}
if all the edges of $\Gamma$ incident with $v$ are labelled with
$X_i^{\pm}$, for some $i \in \{1,2\}$. We denote the set of
$X_i$-monochromatic vertices of $\Gamma$ by $VM_i(\Gamma)$ and put
$VM(\Gamma)= VM_1(\Gamma) \cup VM_2(\Gamma)$.

We say that a vertex $v \in V(\Gamma)$ is \emph{bichromatic} if
there exist edges $e_1$ and $e_2$ in $\Gamma$ with
$$\iota(e_1)=\iota(e_2)=v \ {\rm and} \ lab(e_i) \in X_i^{\pm}, \ i \in \{1,2\}.$$
The  set of bichromatic vertices of $\Gamma$ is denoted by
$VB(\Gamma)$.

A subgraph of $\Gamma$ is called \emph{monochromatic} if it is
labelled only with $X_1^{\pm}$ or only with $X_2^{\pm}$. An
\emph{$X_i$-monochromatic component} of $\Gamma$ ($i \in \{1,2\}$)
is a maximal connected subgraph of $\Gamma$ labelled with
$X_i^{\pm}$, which contains at least one edge.
Thus monochromatic components of $\Gamma$ are graphs determining
subgroups of the factors, $G_1$ or $G_2$.

We say that a graph $\Gamma$ is \emph{$ G$-based} if any path $p
\subseteq \Gamma$ with $lab(p)=_G 1$ is closed. Thus if $\Gamma$
is $G$-based then, obviously, it is well-labelled with $X^{\pm}$.

\begin{defin}[Definition of Precover] A $G$-based  graph $\Gamma$
is a \underline{precover} of $G=G_1 \ast G_2$ if each
$X_i$-monochromatic component of $\Gamma$ is a \underline{cover}
of $G_i$  ($i \in \{1,2\}$).
\end{defin}

Following the terminology of Gitik (\cite{gi_sep}), we use the
term \emph{``covers of $G$''} for \emph{relative (coset) Cayley
graphs} of $G$ and denote by \fbox{$Cayley(G,S)$} the coset Cayley
graph of $G$ relative to the subgroup $S$ of
$G$.\footnote{Whenever the notation $Cayley(G,S)$ is used, it
always means that $S$ is a subgroup of the group $G$ and the
presentation of $G$ is fixed and clear from the context. }
If $S=\{1\}$, then $Cayley(G,S)$ is the \emph{Cayley graph} of $G$
and the notation \fbox{$Cayley(G)$} is used.

Note that the use of the term ``covers'' is adjusted by the  well
known fact that a geometric realization of a coset Cayley graph of
$G$ relative to some $S \leq G$ is a 1-skeleton of a topological
cover corresponding to $S$ of the standard 2-complex representing
the group $G$ (see \cite{stil}, pp.162-163).

\begin{remark}
{\rm Recall that $G=G_1 \ast G_2=gp\langle X|R \rangle$ is given
by $(\ast)$ and $(\ast \ast)$.

Let $\Gamma$ be a graph well-labelled with $X^{\pm}$ such that
each $X_i$-monochromatic component of $\Gamma$ is a cover of $G_i$
($i \in \{1,2\}$). Hence $\Gamma$ is $G$-based, because each cover
of $G_i$ is a $G_i$-based graph.

This allows one to simplify the definition of precovers in the
case of free products by saying that \emph{a graph $\Gamma$ is a
\underline{precover} of $G=G_1 \ast G_2$ if each
$X_i$-monochromatic component of $\Gamma$ is a cover of $G_i$ ($i
\in \{1,2\}$)}.}

 \e
\end{remark}

\begin{conv}
By the above definition, a precover doesn't have to be a connected
graph. However along this paper we restrict our attention only to
connected precovers. Thus any time this term
 is used, we always mean that the corresponding graph
is connected unless it is stated otherwise.

We follow the convention that a graph $\Gamma$ with
$V(\Gamma)=\{v\}$ and $E(\Gamma)=\emptyset$ determining the
trivial subgroup (that is $Lab(\Gamma,v)=\{1\}$) is a (an empty)
precover of $G$.  \e
\end{conv}


\begin{ex} \label{ex:Precovers}
{\rm
 Let $G=\mathbb{Z}_4 \ast  \mathbb{Z}_6=gp\langle x,y | x^4, y^6  \rangle$.

The graph  $\Gamma_1$ on Figure \ref{fig:Precovers} is an example
of a precover of $G$ with one monochromatic component.
$\Gamma_2$, $\Gamma_4$ are examples of precovers of $G$ with two
monochromatic components.

The graph $\Gamma_3$ is not a precover of $G$ because its
$\{x\}$-monochromatic components are not covers of  $\mathbb{Z}_4
$. }\e
\end{ex}
\begin{figure}[!h]
\psfrag{x }[][]{$x$} \psfrag{y }[][]{$y$} \psfrag{v }[][]{$v$}
\psfrag{u }[][]{$u$}
\psfrag{w }[][]{$w$}
\psfrag{x1 - monochromatic vertex }[][]{{\footnotesize
$\{x\}$-monochromatic vertex}}
\psfrag{y1 - monochromatic vertex }[][]{\footnotesize
{$\{y\}$-monochromatic vertex}}
\psfrag{ bichromatic vertex }[][]{\footnotesize {bichromatic
vertex}}
\psfragscanon \psfrag{G }[][]{{\Large $\Gamma_1$}}
\psfragscanon \psfrag{K }[][]{{\Large $\Gamma_2$}}
\psfragscanon \psfrag{L }[][]{{\Large $\Gamma_3$}}
\psfragscanon \psfrag{A }[][]{{\Large $\Gamma_4$}}
\includegraphics[width=\textwidth]{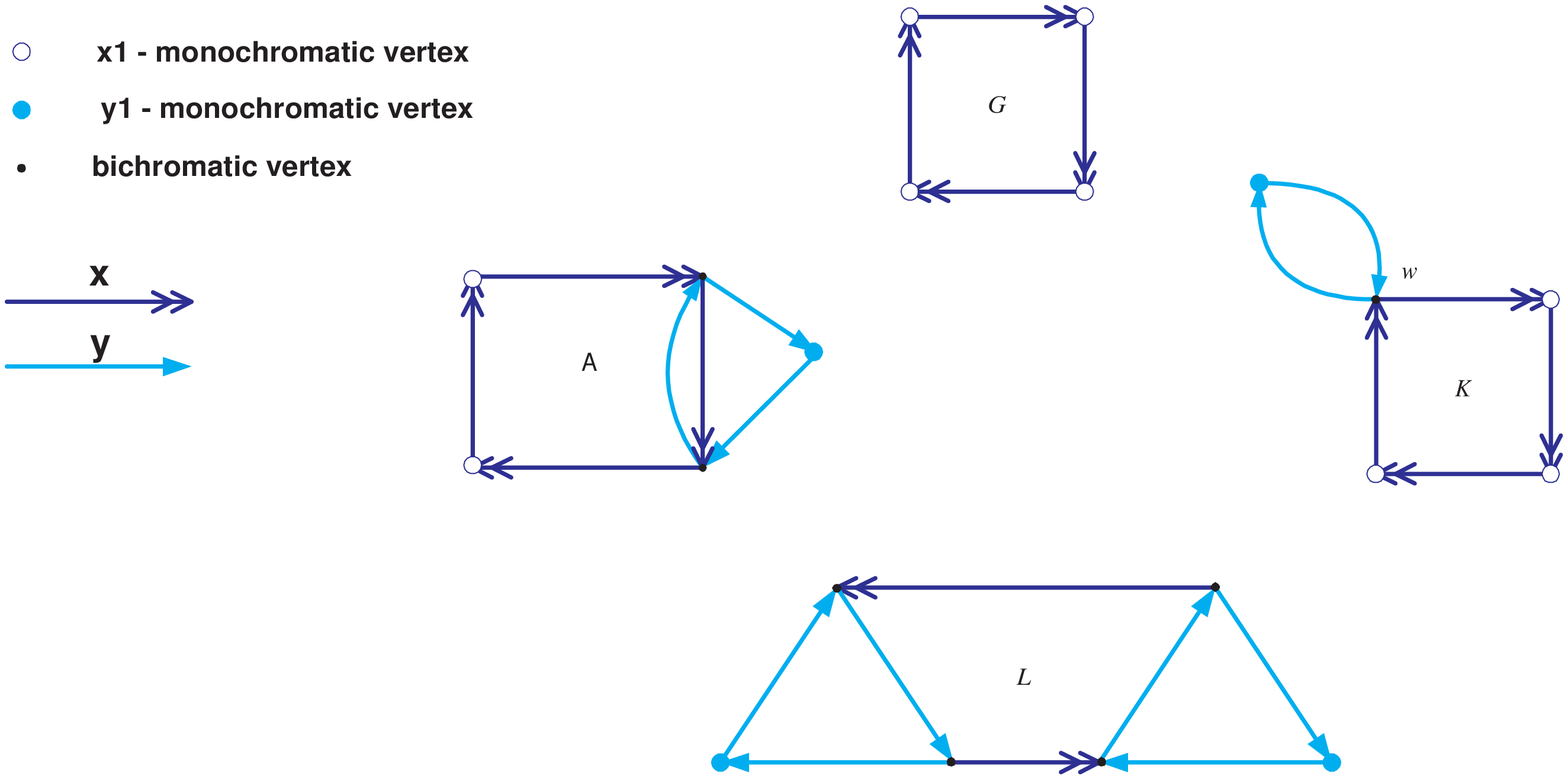}
\caption{ \label{fig:Precovers}}
\end{figure}


A graph $\Gamma$ is \emph{$x$-saturated} at $v \in V(\Gamma)$, if
there exists $e \in E(\Gamma)$ with $\iota(e)=v$ and $lab(e)=x$
($x \in X$). $\Gamma$ is \emph{$X^{\pm}$-saturated} if it is
$x$-saturated for each $x \in X^{\pm}$ at each  $v \in V(\Gamma)$.

\begin{lem}[Lemma 1.5 in \cite{gi_sep}] \label{lemma1.5}
Let $G=gp\langle X|R \rangle$ be a group and let $(\Gamma,v_0)$ be
a graph well-labelled with $X^{\pm}$. Denote $Lab(\Gamma,v_0)=S$.
Then
\begin{itemize}
    \item $\Gamma$ is $G$-based if and only if it can be embedded in $(Cayley(G,S), S~\cdot~1)$,
    \item $\Gamma$ is $G$-based and $X^{\pm}$-saturated if and only if it is isomorphic to  \linebreak[4] $(Cayley(G,S), S
    \cdot~1)$.~
    \footnote{We write $S \cdot 1$ instead of the usual $S1=S$ to distinguish this vertex of $Cayley(G,S)$ as the basepoint of the
    graph.}
\end{itemize}
\end{lem}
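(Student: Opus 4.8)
The plan is to construct an explicit pointed morphism $\pi\colon(\Gamma,v_0)\to(Cayley(G,S),S\cdot 1)$ and then to pin down exactly when it is injective and when it is bijective, exploiting that $Cayley(G,S)$ is itself $G$-based, $X^{\pm}$-saturated, and well-labelled, with loop group $S$ at $S\cdot 1$. The underlying computation to keep in mind is that a path in $Cayley(G,S)$ starting at a coset $Sg$ and reading a word $w$ ends at the coset $S(gw)$.

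I would first dispose of the easy ``if'' directions. Any subgraph of a $G$-based graph is $G$-based, since a path in the subgraph is also a path in the ambient graph; and $Cayley(G,S)$ is $G$-based because if $w=_G 1$ then a path out of $Sg$ with label $w$ returns to $S(gw)=Sg$. Hence if $(\Gamma,v_0)$ embeds in $(Cayley(G,S),S\cdot 1)$ then $\Gamma$ is $G$-based. Likewise $Cayley(G,S)$ is $X^{\pm}$-saturated by construction, and both properties ``$G$-based'' and ``$X^{\pm}$-saturated'' are plainly invariant under isomorphism of labelled graphs, so $(\Gamma,v_0)\cong(Cayley(G,S),S\cdot 1)$ forces $\Gamma$ to have both. (Note $Lab(Cayley(G,S),S\cdot 1)=S$, so the identification of constants in the statement is consistent.)

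For the ``only if'' of the first bullet, assume $\Gamma$ is $G$-based (and, as always here, connected). For $v\in V(\Gamma)$ pick a path $p_v$ from $v_0$ to $v$ and put $\pi(v)=S\cdot lab(p_v)$; I would check well-definedness by observing that for a second choice $p_v'$ one has $lab(p_v)\,lab(p_v')^{-1}\in Lab(\Gamma,v_0)=S$, whence the two cosets coincide. On edges, send $e$ with $\iota(e)=v$ and $lab(e)=x$ to the unique edge of $Cayley(G,S)$ out of $\pi(v)$ labelled $x$; choosing $p_{\tau(e)}=p_v\,e$ shows $\pi$ respects $\iota,\tau,lab$ (and $\overline{\,\cdot\,}$), and $\pi(v_0)=S\cdot 1$. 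The \emph{key step}, and the one place where $G$-basedness is genuinely used, is injectivity: if $\pi(u)=\pi(v)$ then $lab(p_u)\,lab(p_v)^{-1}\in S=Lab(\Gamma,v_0)$, so there is a loop $q$ at $v_0$ with $lab(q)=_G lab(p_u)\,lab(p_v)^{-1}$, and then $\overline{p_u}\,q\,p_v$ is a path from $u$ to $v$ with label $=_G 1$; $G$-basedness makes it closed, so $u=v$. Injectivity on edges is then immediate from well-labelledness of $\Gamma$, so $\pi$ is an embedding.

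Finally, for the second bullet add the hypothesis that $\Gamma$ is $X^{\pm}$-saturated; it remains to see the embedding $\pi$ is surjective. Given a vertex $Sg$ of $Cayley(G,S)$, write $g=_G x_1\cdots x_n$ with $x_i\in X^{\pm}$ (as $X$ generates $G$) and use $X^{\pm}$-saturation to read this word along a path $p$ from $v_0$, ending at a vertex $v$ with $\pi(v)=S\cdot lab(p)=Sg$; so $\pi$ is onto on vertices. For an edge $f$ out of $Sg=\pi(v)$ labelled $x$, saturation gives an edge $e$ of $\Gamma$ out of $v$ labelled $x$, and $\pi(e)$ — being an edge of $Cayley(G,S)$ out of $Sg$ labelled $x$ — equals $f$. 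Thus $\pi$ is a bijective morphism of pointed labelled graphs, hence an isomorphism, which by Remark~\ref{unique isomorphism} is the only one. The sole real subtlety in the whole argument is the injectivity computation above; the rest is bookkeeping with the coset structure of $Cayley(G,S)$.
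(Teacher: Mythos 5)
The paper does not prove this lemma; it is imported verbatim from Gitik \cite{gi_sep}, so there is no in-house argument to compare against. Your proof is correct and is the standard one: the coset map $v \mapsto S\cdot lab(p_v)$ is well defined precisely because $Lab(\Gamma,v_0)=S$, $G$-basedness is exactly what forces injectivity (via the path $\overline{p_u}\,q\,p_v$ with trivial label), and $X^{\pm}$-saturation is exactly what forces surjectivity. The one hypothesis worth making explicit is connectivity of $\Gamma$, which you need for the paths $p_v$ to exist; you flag it parenthetically, and the paper's standing convention supplies it.
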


\begin{cor}
If $\Gamma$ is a precover of $G$ with $Lab(\Gamma,v_0)=H \leq G$
then  $\Gamma$ is a subgraph of $Cayley(G,H)$.
\end{cor}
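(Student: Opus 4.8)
The plan is to obtain this as an immediate specialization of Lemma~\ref{lemma1.5} (Gitik's Lemma), the only thing to check being that ``precover'' already entails ``$G$-based''. So the first step is to observe that, by the Definition of Precover, $\Gamma$ is a $G$-based graph: a precover of $G = G_1 \ast G_2$ is by definition a $G$-based graph each of whose $X_i$-monochromatic components is a cover of $G_i$; and, as recorded in the Remark following that definition, in the free product case the condition on monochromatic components already forces $G$-basedness, since every cover of $G_i$ is a $G_i$-based graph. In particular $\Gamma$ is well-labelled with $X^{\pm}$, so the hypotheses of Lemma~\ref{lemma1.5} are met by the pointed graph $(\Gamma, v_0)$.

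The second step is to apply the first bullet of Lemma~\ref{lemma1.5} with $S := Lab(\Gamma, v_0) = H$. That bullet states precisely that a well-labelled graph is $G$-based if and only if it embeds in $(Cayley(G, S), S \cdot 1)$. Since we have just verified that $\Gamma$ is $G$-based, we get an embedding of pointed labelled graphs $\pi : (\Gamma, v_0) \hookrightarrow (Cayley(G, H), H \cdot 1)$. By definition an embedding is an injective morphism of labelled graphs, so its image $\pi(\Gamma)$ is a subgraph of $Cayley(G, H)$ isomorphic to $\Gamma$, with $v_0$ carried to the basepoint $H \cdot 1$. Identifying $\Gamma$ with $\pi(\Gamma)$ (as licensed by Remark~\ref{unique isomorphism}), $\Gamma$ is a subgraph of $Cayley(G, H)$, which is the assertion.

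There is essentially no hard step here; the content is entirely in Lemma~\ref{lemma1.5}, and the work is bookkeeping. The two points deserving a word of care are that the basepoint $v_0$ is the vertex mapped to $H \cdot 1$ (so that the subgroup read off from $\Gamma$ genuinely sits at the correct vertex of the coset Cayley graph), and the degenerate case in which $\Gamma$ has $V(\Gamma) = \{v_0\}$ and $E(\Gamma) = \emptyset$; there $H = \{1\}$ by convention and $\Gamma$ embeds as the single vertex $\{1\} \cdot 1$ of $Cayley(G, \{1\})$, so the statement holds trivially. Finally, the standing convention that precovers are connected is harmless, since $Cayley(G, H)$ is connected and the embedding lands inside it in any case.
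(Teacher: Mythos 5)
Your proof is correct and is exactly the argument the paper intends: the corollary is stated as an immediate consequence of Lemma~\ref{lemma1.5}, since a precover is by definition $G$-based and hence embeds in $(Cayley(G,H), H\cdot 1)$ by the first bullet of that lemma. Your additional remarks on the basepoint and the degenerate one-vertex case are harmless bookkeeping and do not change the route.
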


Thus a precover of $G$ can be viewed as a part of the
corresponding cover  of $G$, which explains the use of the term
``precovers''.

\begin{defin}[Definition of Reduced Precover]
A \emph{reduced precover} of $G$ is a precover $(\Gamma,v_0)$ of
$G$ with no \underline{redundant monochromatic components}.

A $X_i$-monochromatic component $C$ of the precover $(\Gamma,v_0)$
is \underline{redundant} if the following holds
\begin{itemize}
\item $Lab(C,v)=\{1\}$ (equivalently, by Lemma \ref{lemma1.5},
$C=Cayley(G_i)$),
\item $|VB(C)| \leq 1$,
\item $v_0 \not\in VM(C)$.
\end{itemize}
\end{defin}

\begin{ex}
{\rm
 Let $G=\mathbb{Z}_4 \ast  \mathbb{Z}_6=gp\langle x,y | x^4, y^6  \rangle$.

Any choice of a basepoint in the graph $\Gamma_1$ on Figure
\ref{fig:Precovers} yields a non reduced precover, while any
basepoint of  $\Gamma_4$ gives a reduced precover.

In the graph $\Gamma_2$ any choice of the basepoint $v$ except
that of $w$ (that is $v=w$) makes $(\Gamma_2,v)$ to be a reduced
precover of $G$.} \e
\end{ex}

\begin{remark}[\cite{m-foldings}] \label{remark: morphism of precovers}
{\rm Let $\phi: \Gamma \rightarrow \Delta$ be a morphism of
labelled graphs. If $\Gamma$ is a precover of $G$, then
$\phi(\Gamma)$ is a precover of $G$ as well. }\e
\end{remark}



\section{The Basic Step}
\label{subsection: TheBasicStep}

Let $G=G_1 \ast G_2$ be a free product of finite groups given by
$(\ast)$ and $(\ast \ast)$.

Let $(\Gamma,v_0)$ is a  finite pointed $G$-based graph with
$Lab(\Gamma,v_0)=H \leq G$.

Let $C$ be a $X_i$-monochromatic  component of $\Gamma$ which is a
cover of $G_i$ ($i \in \{1,2\}$). Let $v \in V(C)$ be the
basepoint of $C$. Let $T(C)$ \label{construction of span tree} be
a spanning tree of $C$ with the root vertex $v$.

Let $P_v$ be  an \emph{approach path} in $\Gamma$ from the
basepoint $v_0$ to a vertex $v \in V(C)$ (we assume that $P_v$ is
freely reduced). We put $g_v \equiv lab(P_v)$.

Let $P_v=P_{v1} \cdots P_{vm}$ be a decomposition of $P_v$ into
maximal monochromatic paths. Without loss of generality, we can
assume that $P_{vm} \cap C=\{v\}$. Otherwise, we choose the
basepoint of $C$ to be $v'=\tau(P_{v(m-1)})=\iota(P_{vm})$ and
take the approach path $P_{v'}$ to be $P_{v'}=P_{v1} \cdots
P_{v(m-1)}$.

Following the above assumption, whenever $v_0 \in V(C)$ we chose
$v=v_0$. Thus the path $P_v$ is empty and $g_{v} =_G 1$.

\begin{figure}[!htb]
\begin{center}
\psfrag{B }[][]{{\large $\Gamma(H)$}} \psfrag{A }[][]{{\large
$\Gamma'$}}
\psfrag{u1 }[][]{$v$} \psfrag{v0 }[][]{$v_0$} \psfrag{C }[][]{$C$}
\psfrag{p }[][]{$P_v$}

\includegraphics[width=0.6\textwidth]{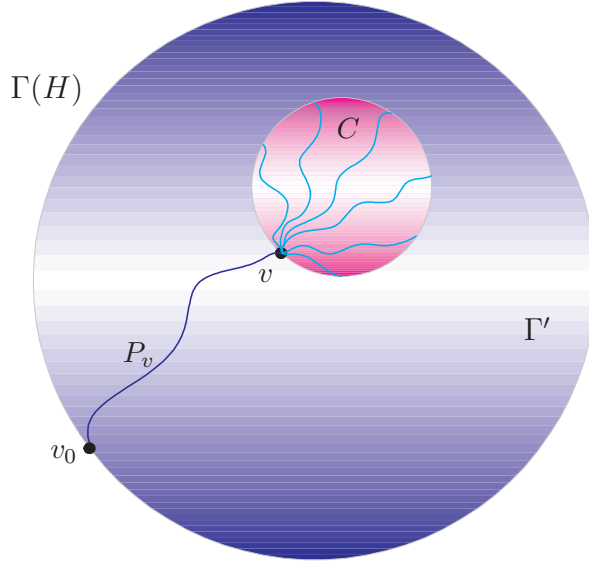}
\caption[A basic step in our computation of  a Kurosh
Decomposition]{ \footnotesize The collection of bright paths
correspond to the spanning tree $T(C)$.
 \label{ex:KuroshDecomp}}
\end{center}
\end{figure}

Let $\Gamma'$ be the graph obtained from $\Gamma$ by removing all
the edges of $C$ which are not in $E(T(C))$. More precisely,
$$E(\Gamma')=E(\Gamma) \setminus E(T(C)), \ \
V(\Gamma')=V(\Gamma).$$

Evidently, the graph $\Gamma'$ is connected.
Roughly speaking, it is a subgraph of $\Gamma$ with $v_0 \in
V(\Gamma')$. Hence $(\Gamma',v_0)$ is a finite pointed $G$-based
graph. Moreover,
\begin{equation} \label{eq: intersection of Gamma' C}
\Gamma' \cap C=T(C)
\end{equation}
 Thus
\begin{equation} \label{eq: empty intersection of loops Gamma' C}
Loop(C,v) \cap Loop(\Gamma',v) =\emptyset.
\end{equation}

{ \ }

To exploit the connection between $Lab(\Gamma,v_0)$, $Lab(C,v)$
and $Lab(\Gamma',v_0)$ we need the following classical result.

\begin{lem}[Lemma IV.1.7 \cite{l_s}] \label{free product}
Let $A$, $B$ be subgroups of a group $G$ such that $A \cup B$
generates $G$, $A \cap B=\{1\}$, and if $g_1, \ldots , g_n$ is a
\underline{reduced sequence} with $n>0$ (that is each $g_i$ is in
one of $A$ or $B$ and successive $g_i$, $g_{i+1}$ are not in the
same factor),  then $g_1g_2 \ldots g_n \neq_G 1$. Then $G \simeq A
\ast B$.
\end{lem}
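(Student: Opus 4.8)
The plan is to prove Lemma~\ref{free product} by invoking the universal property of the free product together with the Normal Form Theorem. First I would construct the natural homomorphism. Since $A\cup B$ generates $G$, and the inclusions $A\hookrightarrow G$, $B\hookrightarrow G$ are homomorphisms, the universal property of the free product $A\ast B$ yields a surjective homomorphism $\phi:A\ast B\twoheadrightarrow G$ restricting to the identity on each of $A$ and $B$. The entire content of the lemma is that $\phi$ is injective; once that is shown, $\phi$ is the desired isomorphism.

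To prove injectivity, I would take an arbitrary nontrivial element $w$ of $A\ast B$ and show $\phi(w)\neq_G 1$. By the Normal Form Theorem for free products (Theorem IV.1.2 in \cite{l_s}), $w$ can be written as $w=g_1g_2\cdots g_n$ with $n\geq 1$, where each $g_i$ lies in $A\setminus\{1\}$ or in $B\setminus\{1\}$ and consecutive $g_i$, $g_{i+1}$ come from different factors — precisely a reduced sequence with $n>0$ in the sense of the hypothesis. (Here one uses $A\cap B=\{1\}$ so that "nontrivial in $A$" and "nontrivial in $B$" are unambiguous and there is no overlap between the two factors inside $A\ast B$.) Applying $\phi$ gives $\phi(w)=_G g_1g_2\cdots g_n$ (viewing each $g_i$ now as an element of $G$), and the hypothesis of the lemma says exactly that this product is $\neq_G 1$. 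Hence $\ker\phi$ is trivial and $\phi$ is an isomorphism, so $G\simeq A\ast B$.

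The only genuine subtlety — the step I would flag as the main obstacle — is making the bookkeeping with $A\cap B=\{1\}$ airtight: one must be sure that the normal form of an element of the \emph{abstract} free product $A\ast B$ has syllables that correspond bijectively and unambiguously to the factors, and that the reduced sequence fed into the hypothesis is honestly of the prescribed form (no syllable accidentally trivial, no two consecutive syllables in the same factor after identification in $G$). Since $\phi$ is injective on each factor and $A\cap B=\{1\}$ in $G$, a syllable $g_i\in A\setminus\{1\}$ maps to a nontrivial element of $G$ not lying in $B$ unless it is $1$, so the "different factors" condition is preserved under $\phi$, and the reduced sequence in $A\ast B$ maps to a reduced sequence in $G$ in the hypothesis's sense. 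Everything else is a direct citation of the Normal Form Theorem and the universal property, so the proof is short.
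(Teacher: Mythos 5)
Your proof is correct: it is the standard argument --- surjectivity of the canonical map $\phi:A\ast B\to G$ from the universal property, injectivity from the Normal Form Theorem together with the hypothesis on reduced sequences, with $A\cap B=\{1\}$ guaranteeing that a reduced sequence in the abstract free product maps to a reduced sequence in $G$ in the sense required by the hypothesis. Note that the paper gives no proof of Lemma~\ref{free product} at all; it is quoted as Lemma IV.1.7 of \cite{l_s}, and your argument is essentially the textbook proof found there.
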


Now we are ready to give the desired connection. The following
lemma is stated in terms of the above notation.


\begin{lem} \label{H is free product of}
The following holds.
\begin{enumerate}
 \item [(i)] $H=\langle g_{v} Lab(C,v)g_{v}^{-1}, Lab(\Gamma',v_0) \rangle.$
 \item [(ii)]
$\langle g_{v} Lab(C,v)g_{v}^{-1}, Lab(\Gamma',v_0) \rangle=g_{v}
Lab(C,v_r)g_{v}^{-1} \ast Lab(\Gamma',v_0).$
 \item [(iii)] $H=g_{v} Lab(C,v)g_{v}^{-1} \ast Lab(\Gamma',v_0).$
\end{enumerate}
\end{lem}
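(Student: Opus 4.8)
The plan is to establish the three items in order, using the decomposition $\Gamma = C \cup \Gamma'$ together with the equality $\Gamma' \cap C = T(C)$ from \eqref{eq: intersection of Gamma' C}. For item (i), I would argue that any closed path $p$ at $v_0$ in $\Gamma$ can be rewritten, up to $G$-equality of its label, as a product of loops each of which is either a conjugate (by $g_v$, via the approach path $P_v$) of a loop in $C$ at $v$, or a loop in $\Gamma'$ at $v_0$. The idea is to factor $p$ at each maximal subpath lying inside $C$: whenever $p$ enters $C$ at some vertex and leaves it later, insert the approach path $P_v$ (and its inverse) at the entry/exit points, using that $C$ is connected so that one can travel within $C$ between any two of its vertices. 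Inserting $P_v \overline{P_v}$ does not change the label in $FG(X)$, hence not in $G$; the resulting factors are exactly loops at $v$ in $C$ (conjugated out to $v_0$ by $g_v$) and loops at $v_0$ staying in $\Gamma'$. This shows $H = Lab(\Gamma,v_0) \subseteq \langle g_v Lab(C,v) g_v^{-1}, Lab(\Gamma',v_0)\rangle$, and the reverse inclusion is immediate since both generating subgroups are visibly contained in $Lab(\Gamma,v_0)$ (a loop in $C$ at $v$ conjugated by $g_v = lab(P_v)$ is the label of the closed path $P_v \cdot (\text{loop}) \cdot \overline{P_v}$ at $v_0$, and a loop in $\Gamma'$ at $v_0$ is a loop in $\Gamma$ at $v_0$).

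For item (ii), I would apply Lemma \ref{free product} with $A = g_v Lab(C,v) g_v^{-1}$ and $B = Lab(\Gamma',v_0)$. The group generated by $A \cup B$ is exactly $\langle A, B\rangle$, so the first hypothesis is automatic. For $A \cap B = \{1\}$ and the reduced-sequence condition, the key point is that elements of $A$ have, after conjugating back by $g_v^{-1}$, labels that are loops in the $X_i$-monochromatic cover $C$; since $C$ is a cover of the finite factor $G_i$, such labels represent elements of $G_i$, i.e. elements of syllable length at most $1$ lying in the factor $G_i$. Conjugation by $g_v$ does not change which free factor (up to conjugacy) they belong to. Meanwhile, a nonempty freely reduced loop in $\Gamma'$ at $v_0$ whose label is nontrivial in $G$: here I would use that $\Gamma'$ is $G$-based and invoke \eqref{eq: empty intersection of loops Gamma' C}, $Loop(C,v) \cap Loop(\Gamma',v) = \emptyset$, to conclude that no nontrivial element of $Lab(\Gamma',v_0)$ is the $g_v$-conjugate of an element of $Lab(C,v)$; this gives $A \cap B = \{1\}$. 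The reduced-sequence / no-cancellation condition is then a normal-form argument: a product $a_1 b_1 a_2 b_2 \cdots$ with $a_k \in A\setminus\{1\}$, $b_k \in B\setminus\{1\}$ corresponds to reading, in $\Gamma$, the closed path $P_v(\text{loop in }C)\overline{P_v}(\text{loop in }\Gamma')P_v\cdots$; because $\Gamma$ is $G$-based, this path's label equals $1$ in $G$ only if the path is closed — but tracking the endpoints shows it genuinely returns to $v_0$ while traversing, in $C$, a nontrivial element of the cover, so the label cannot be trivial. This is the step I expect to be the main obstacle: making the no-cancellation argument airtight, i.e. showing that the alternation between a nontrivial loop in the monochromatic cover $C$ and a nontrivial loop in $\Gamma'$ cannot collapse in $G$, using only that $\Gamma$ is $G$-based and that $\Gamma' \cap C = T(C)$.

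Finally, item (iii) is an immediate combination: by (i) the subgroup $H$ equals $\langle g_v Lab(C,v) g_v^{-1}, Lab(\Gamma',v_0)\rangle$, and by (ii) this subgroup is the free product $g_v Lab(C,v) g_v^{-1} \ast Lab(\Gamma',v_0)$, so $H = g_v Lab(C,v) g_v^{-1} \ast Lab(\Gamma',v_0)$, which is the claim (reading $v_r$ as $v$, the root/basepoint of $T(C)$). The only subtlety is to note that $Lab(C,v)$ injects into $H$ as a free factor, which is exactly what (ii) provides via Lemma \ref{free product}; conjugation by $g_v$ is an automorphism of $G$'s overgroup structure restricted to these subgroups and so preserves the free-product decomposition. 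I would close by remarking that iterating this basic step over all monochromatic components of $\Gamma(H)$, and handling the residual free group coming from the edges of $\Gamma'$ outside all spanning trees, yields the full Kurosh decomposition — but that iteration is carried out in the later section, not here.
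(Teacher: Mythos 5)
Your overall architecture matches the paper's: (i) by surgery on closed paths using the approach path $P_v$ and connector paths inside $C$, (ii) by verifying the hypotheses of Lemma~\ref{free product} via a normal form argument, and (iii) by combining the two. However, the decisive step --- the no-cancellation condition in (ii) --- is not actually proved in your proposal, and the justification you offer for it is logically backwards. You write that ``because $\Gamma$ is $G$-based, this path's label equals $1$ in $G$ only if the path is closed,'' and then try to derive nontriviality of the label. But the path in question, being a concatenation of loops at $v_0$, \emph{is} closed; $G$-basedness asserts that paths with trivial label are closed, and says nothing about whether a given closed path has trivial label. So this property gives you nothing here, and the sentence ``tracking the endpoints shows it \ldots traverses, in $C$, a nontrivial element of the cover, so the label cannot be trivial'' asserts the conclusion rather than proving it. The actual argument conjugates everything back to $v$, writes the product as $lab(t'_1)lab(s'_1)\cdots lab(t'_k)lab(s'_k)$ with $t'_l \in Loop(C,v)$ of syllable length $1$ in $G_i$ and each $s'_l \in Loop(\Gamma',v)$ decomposed into maximal monochromatic subpaths, and then must handle the case in which the first or last monochromatic syllable of some $s'_l$ also lies in $G_i$: since $C$ is a cover of $G_i$, the vertex $v$ is $X_i^{\pm}$-saturated, so such a syllable is necessarily read inside $\Gamma'\cap C=T(C)$; being a nonempty path in a tree it is unclosed, hence by $G$-basedness of $\Gamma'$ its label is nontrivial, and the possible coalescence of adjacent $G_i$-syllables still leaves a normal word of syllable length greater than $1$, so the Normal Form Theorem applies. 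This is precisely the content your sketch identifies as ``the main obstacle'' but does not supply.

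Two smaller points. In (i), the connector paths you insert ``using that $C$ is connected'' must be taken inside the spanning tree $T(C)=\Gamma'\cap C$, not merely inside $C$: otherwise the complementary pieces of the decomposed path are not contained in $\Gamma'$ and their labels need not lie in $Lab(\Gamma',v_0)$. And in (ii), deducing $A\cap B=\{1\}$ from the disjointness of loop sets in \eqref{eq: empty intersection of loops Gamma' C} alone is too quick (two disjoint sets of closed paths can in principle read the same group element); the trivial intersection is really a byproduct of the same normal form analysis.
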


\begin{proof}
\textbf{(i)}  \
%

Since $Lab(P_{v}Loop(C,v)\overline{P_{v}})=_G g_{v}
Lab(C,v)g_{v}^{-1}$  and
$$P_{v}Loop(C,v)\overline{P_{v}} \subseteq
Loop(\Gamma,v_0),$$ we have $g_{v} Lab(C,v)g_{v}^{-1} \leq H$.

On the other hand, $(\Gamma',v_0)$ embeds in $(\Gamma,v_0)$. Hence
$Lab(\Gamma',v_0) \leq Lab(\Gamma,v_0)=H$. Therefore
\begin{equation} \label{eq:LabInH}
\langle g_{v} Lab(C,v)g_{v}^{-1}, Lab(\Gamma',v_0) \rangle
\subseteq H.
\end{equation}

Conversely, let $h \in H$. Thus  there exists a path $q$ in
$\Gamma$ such that $\iota(q)=\tau(q)=v_0$ and $lab(q) =_G h$.

If $q$ is a path in $\Gamma'$ or in
$P_{v}Loop(C,v)\overline{P_{v}}$. Then we are done.

Otherwise, there is a decomposition $q=q_1t_1q_2t_2 \cdots
t_{k-1}q_{k}$, where $q_i$ are paths in $\Gamma'$ and $t_i$ are
paths in $C$ such that $t_i \cap \Gamma'=\{ \iota(t_i), \tau(t_i)
\}$.

The path $t_i$ can be obtained by the path free reductions from
the path
$$\overline{P_{v}p_{\iota(t_i)}}P_{v}p_{\iota(t_i)}t_i\overline{P_{v}p_{\tau(t_i)}}P_{v}p_{\tau(t_i)},$$
where $p_{\iota(t_i)}$ and $p_{\tau(t_i)}$ are the approach paths
in the spanning tree $T(C)$ from the root vertex $v$ to the
vertices $\iota(t_i)$ and $\tau(t_i)$, respectively. Note that if
$\iota(t_i)=v$ or $\tau(t_i)=v$ then the path $p_{\iota(t_i)}$ or
the path $p_{\tau(t_i)}$, respectively, is empty.

Thus the path $q_{i}t_iq_{i+1}$ can be obtained by the path free
reductions  from the path
$$(q_i\overline{P_{v}p_{\iota(t_i)}}) (P_{v}p_{\iota(t_i)}t_i\overline{P_{v}p_{\tau(t_i)}})
(P_{v}p_{\tau(t_i)}q_{i+1}).$$
 The path
$p_{\iota(t_i)}t_i \overline{p_{\tau(t_i)}}$ is  in $C$ and it is
closed at $v$. Hence the path
$P_{v}p_{\iota(t_i)}t_i\overline{P_{v}p_{\tau(t_i)}}$ is a path
closed at $v_0$ in $\Gamma$ with
$lab(P_{v}p_{\iota(t_i)}t_i\overline{P_{v}p_{\tau(t_i)}}) \in
g_{v}Lab(C,v)g_{v}^{-1}$.

By the construction, the approach paths $P_{v}$, $p_{\iota(t_i)}$
are  in $\Gamma'$. Thus  the paths
$$q_1
\overline{P_{v}p_{\iota(t_{1})}}, \ \ P_{v}p_{\tau(t_{k-1})}q_k \
\ {\rm and} \ \ P_{v}p_{\tau(t_{i-1})}q_i
\overline{P_{v}p_{\iota(t_{i})}} \ \ (\forall \: 2 \leq i \leq
k-1)$$
 are closed at $v_0$ in $\Gamma'$. Hence the labels of these paths are  in $Lab(\Gamma',v_0)$.
Therefore
$$h \equiv lab(q) \in \langle g_{v} Lab(C,v)g_{v}^{-1},
Lab(\Gamma',v_0) \rangle.$$
Thus
\begin{equation}\label{eq:HInLab}
H \subseteq \langle g_{v} Lab(C,v)g_{v}^{-1}, Lab(\Gamma',v_0)
\rangle.
\end{equation}
The combination of (\ref{eq:LabInH}) and (\ref{eq:HInLab}) gives
the desired conclusion that
$$H=\langle g_{v} Lab(C,v)g_{v}^{-1}, Lab(\Gamma',v_0) \rangle.$$

\medskip

%
\textbf{(ii)} \ %
%
We assume that $ Lab(C,v) \neq \{1\}$, otherwise the statement is
trivial. To get the desired equality we have to show that the
conditions of Lemma~\ref{free product} are satisfied.

Since $Lab(P_{v}Loop(C,v)\overline{P_{v}})=_G g_{v}
Lab(C,v)g_{v}^{-1}$ and, by (\ref{eq: empty intersection of loops
Gamma' C}),  $$P_{v}Loop(C,v)\overline{P_{v}} \cap Loop
(\Gamma',v_0)=\emptyset,$$ we have  $g_{v} Lab(C,v)g_{v}^{-1} \cap
Lab(\Gamma',v_0)=\{1\}$.

To prove the satisfaction of the second condition of
Lemma~\ref{free product} we let
$$1 \neq z_l \in g_{v} Lab(C,v)g_{v}^{-1} \ \  { \rm and} \ \  1 \neq w_l
\in Lab(\Gamma',v_0) \ \ (1 < l < k)$$ and show that
$$z_1w_1 \cdots z_kw_k \neq_G 1.$$

Hence there exist closed paths $t_l \in
P_{v}Loop(C,v)\overline{P_{v}}$ and $s_l \in Loop (\Gamma',v_0)$
($1 \leq l \leq k$)  such that
$$ lab(t_l)=_G z_l \ \ {\rm and} \ \ lab(s_l)=_G w_l.$$
Thus $lab(t_l) =_G g_{v} z_l'g_{v}^{-1}$ and there exists a
nonempty path $t_l' \in Loop(C,v)$ such that  $1 \neq z_l' \equiv
lab(t'_l)$ ($1 \leq l \leq k$). Hence $lab(t'_l) \in G_i$ is a
normal word in $G$ of the syllable length 1.

On the other hand, $Lab(\Gamma',v_0)=g_{v} Lab(\Gamma',v)
g_{v}^{-1}$.
Hence, for all $1 \leq l \leq k$, there exists a nonempty path
$s'_l \in Loop (\Gamma',v)$ such that  $$lab(s_l)=_G g_{v}
lab(s'_l)g_{v}^{-1} \ \ (lab(s'_l) \neq 1).$$ Since the graph
$\Gamma'$ is $G$-based,  we can assume (without loss of
generality) that the path $s_l'$ is normal, that is there is a
decomposition of $s_l'$ into maximal monochromatic paths
$s_l'=s'_{l1}s'_{l2} \cdots s'_{lm_l}$ such that
$lab(s_{lf}')\equiv w_{lf} \neq_G 1$, for all $1 \leq f \leq m_l$.
Thus $lab(s'_l)$ is a normal word in $G$ given by the normal
decomposition
$$lab(s'_l) \equiv w_{l1} \cdots w_{lm_l}.$$

We stress that
\begin{eqnarray}
 z_1w_1  & \cdots & z_kw_k   =_G  lab(t_1)lab(s_1) \cdots  lab(t_k)lab(s_k) \nonumber \\
                        & =_G & g_{v} lab(t_1')g_{v}^{-1}g_{v}
                        lab(s'_1)g_{v}^{-1} \cdots g_{v} lab(t_k')g_{v}^{-1}g_{v}
                        lab(s'_k)g_{v}^{-1} \nonumber \\
                        & =_G & g_{v} lab(t'_1)lab(s'_1) \cdots
                        lab(t'_k)lab(s'_k)g_{v}^{-1}  \nonumber
\end{eqnarray}
Note that if $m_l=1$ then, by the construction of $\Gamma'$,
$w_{lm_l} \in G_{\gamma}$ ($1 \leq i \neq \gamma \leq 2$).

If $w_{11}, w_{l1}, w_{(l-1)m_{l-1}} \in G_\gamma$, for all $2
\leq l \leq k$ ($1 \leq i \neq \gamma \leq 2$), then
$$lab(t'_1)lab(s'_1) \cdots lab(t'_k)lab(s'_k)$$ is a normal word in $G$ of syllable
length $k+\sum_{l=1}^k m_l>1$, because $t_l' \in G_i$. Hence
$lab(t'_1)lab(s'_1) \cdots lab(t'_k)lab(s'_k) \neq_G 1$, by the
Normal Form Theorem for Free Products \cite{l_s} (see Section
\ref{sec:Preliminaries}).

Otherwise, $w_{11} \in G_i$  or there exists $2 \leq l \leq k$
such that $w_{l1} \in G_i$ or $w_{(l-1)m_{l-1}} \in G_i$.

Recall that the graph $\Gamma'$ is well-labelled with $X^{\pm}$.
 Since, by our assumption, $C$ is a $X_i$-monochromatic
component of $\Gamma$ which is a cover of $G_i$, each $v \in V(C)$
is $X_i^{\pm}$-saturated. Thus, each path in $\Gamma$ which starts
at such vertex $v$ with label in $G_i$ is a path in $C$. Therefore
 either $s'_{11}$ or $s'_{l1}$ or $s'_{(l-1)m_{l-1}}$ is
in $\Gamma' \cap C=T(C)$.

Let $q \subseteq T(C)$ and $r \in Loop(C,v)$ such that either
$\tau(q)=v$ or $\iota(q)=v$. Thus the paths $qr$ and $rq$,
respectively, are unclosed, because $q$ is unclosed. Since the
graph $\Gamma'$ is $G$-based, we have either $lab(qr) \neq_G 1$ or
$lab(rq) \neq_G 1$.

Moreover, if $q_1, q_2 \subseteq T(C)$ such that
$\tau(q_1)=\iota(q_2)=v$ then the path $q_1rq_2$ is closed if and
only if $q_2=\bar{q_1}$. Thus $q_1rq_2=q_1r\bar{q_1}$. If $lab(r)
\neq_G 1$ then $lab(q_1r\bar{q_1}) \equiv
lab(q_1)lab(r)lab(q_1)^{-1} \neq_G 1$.

Therefore $lab(t'_1)lab(s'_1) \cdots lab(t'_k)lab(s'_k)$ can be
viewed as a normal word in $G$ of length at least $(\sum_{l=1}^k
m_l )-(k-1) > 1.$
Hence $lab(t'_1)lab(s'_1) \cdots lab(t'_k)lab(s'_k) \neq_G 1$, by
the Normal Form Theorem for Free Products \cite{l_s}.
Thus
\begin{eqnarray}
 z_1w_1  & \cdots & z_kw_k   =_G  g_{v_r} lab(t'_1)lab(s'_1) \cdots
                        lab(t'_k)lab(s'_k)g_{v_r}^{-1}  \neq_G 1.  \nonumber
\end{eqnarray}
Therefore the conditions of Lemma~\ref{free product} are
satisfied. Hence $$\langle g_{v_r} Lab(C,v_r)g_{v_r}^{-1},
Lab(\Gamma',v_0) \rangle=g_{v_r} Lab(C,v_r)g_{v_r}^{-1} \ast
Lab(\Gamma',v_0).$$
%
\textbf{(iii)} \ %
%
The  combination of (i) and (ii) yields
$$H= g_{v} Lab(C,v)g_{v}^{-1} \ast Lab(\Gamma',v_0) .$$
\end{proof}


\section{Reading off Kurosh Decompositions}
\label{subsection: The Reading off Kurosh Decomposition}

Let $H$ be a finitely generated subgroup of a free product of
finite groups $G=G_1 \ast G_2$ given by ($\ast$) and ($\ast
\ast$). Consider $\Gamma(H)$ to be the subgroup graph of $H$
constructed by the generalized Stallings algorithm (see Appendix
for the algorithm description).

In the current section we introduce (along with the proof  of
Theorem~\ref{cor: alg_Kurosh}) an algorithm which reads off a
Kurosh decomposition of $H$ from its subgroup graph $\Gamma(H)$.
This algorithm  relays largely on the basic step construction
introduced in the previous section.

Another essential step of the algorithm is provided by
understanding whether the given labelled graph determines a free
subgroup.
In \cite{m-algI} (Theorem 6.4) such a connection was obtained for
subgroup graphs which are reduced precovers. Below we restate this
result in terms of free products of finite groups.

\begin{thm}(Theorem 6.4 in  \cite{m-algI}) \label{thm:
freeness} $H$ is  free if and only if each $X_i$-monochromatic
component of $\Gamma(H)$ is isomorphic to $Cayley(G_i)$, for all
$i \in \{1,2\}$.
\end{thm}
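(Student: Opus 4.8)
The plan is to prove both directions using the basic step construction of Lemma~\ref{H is free product of} together with the structure of $\Gamma(H)$ as a reduced precover (Theorem~\ref{thm: properties of subgroup graphs}). For the ``if'' direction, suppose each $X_i$-monochromatic component $C$ of $\Gamma(H)$ is isomorphic to $Cayley(G_i)$, equivalently (by Lemma~\ref{lemma1.5}) that $Lab(C,v)=\{1\}$ for every monochromatic component. I would then iterate the basic step: pick a monochromatic component $C$, choose a spanning tree $T(C)$, an approach path $P_v$, and form $\Gamma'$ as on page~\pageref{construction of span tree}. By Lemma~\ref{H is free product of}(iii), $H = g_v Lab(C,v)g_v^{-1} \ast Lab(\Gamma',v_0) = Lab(\Gamma',v_0)$, since $Lab(C,v)=\{1\}$. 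The graph $\Gamma'$ has strictly fewer monochromatic edges than $\Gamma(H)$ (all non-tree edges of $C$ were deleted, and a cover of a nontrivial finite group has at least one non-tree edge), so induction on the number of monochromatic edges applies; the base case is a graph with no monochromatic components, i.e.\ a single vertex, where $H=\{1\}$ is free. Thus $H$ is a free group.

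For the ``only if'' direction, I would argue the contrapositive: suppose some $X_i$-monochromatic component $C$ of $\Gamma(H)$ is \emph{not} isomorphic to $Cayley(G_i)$. Since $\Gamma(H)$ is a precover, $C$ is a cover of $G_i$, so by Lemma~\ref{lemma1.5} this means $Lab(C,v) = K$ is a \emph{nontrivial} subgroup of the finite group $G_i$. Running one basic step on this component gives, by Lemma~\ref{H is free product of}(iii), $H = g_v K g_v^{-1} \ast Lab(\Gamma',v_0)$. Now $g_v K g_v^{-1}$ is a nontrivial finite group (it is isomorphic to $K \leq G_i$), hence it has torsion, hence $H$ has torsion and cannot be free. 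This completes the contrapositive.

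The main obstacle — and the one place that needs genuine care — is the iteration in the ``if'' direction: one must verify that after performing a basic step the resulting graph $\Gamma'$ still falls within the hypotheses needed to continue, and that the process terminates with the trivial subgroup rather than stalling. Concretely, I need to check that $(\Gamma',v_0)$ is again a finite pointed $G$-based graph (this is noted in Section~\ref{subsection: TheBasicStep}), that each of its remaining $X_i$-monochromatic components is still a cover of $G_i$ (the untouched components are unchanged, and the processed component $C$ is replaced by the tree $T(C)$, which contributes no monochromatic component at all since $T(C)$, being a tree, has monochromatic components only if... — actually $T(C)$ is a subtree and may still be a monochromatic subgraph, but $Lab(T(C),v)=\{1\}$, so even if one reclassifies it, it contributes the trivial factor), and that the measure ``number of edges lying in monochromatic components'' strictly decreases. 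A clean way to avoid the precover bookkeeping is to phrase the induction purely in terms of finite $G$-based pointed graphs $(\Delta,v_0)$ all of whose monochromatic components $C$ satisfy $Lab(C,v)=\{1\}$ — this class is closed under the basic step and the edge count strictly drops, so the induction goes through and yields $Lab(\Delta,v_0)$ free; applying it to $\Delta=\Gamma(H)$ gives the result.
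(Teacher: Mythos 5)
Note first that this theorem is quoted from \cite{m-algI} and is not proved in the present paper, so your proposal has to stand on its own; the strategy you choose (iterate the basic step of Section~\ref{subsection: TheBasicStep} and apply Lemma~\ref{H is free product of}(iii) in both directions) is the natural one given the paper's toolkit. Your ``only if'' direction is complete: since $\Gamma(H)$ is a precover, each monochromatic component $C$ is a cover of $G_i$, so $C\cong Cayley(G_i,Lab(C,v))$ by Lemma~\ref{lemma1.5}, and $C\not\cong Cayley(G_i)$ forces $Lab(C,v)$ to be a nontrivial \emph{finite} subgroup of $G_i$; Lemma~\ref{H is free product of}(iii) then exhibits a conjugate of it inside $H$, and a free group has no nontrivial finite subgroups.

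The gap is in the termination of your induction in the ``if'' direction. The base case you state --- ``a graph with no monochromatic components, i.e.\ a single vertex'' --- is not what the iteration reaches. Each basic step replaces the processed cover component $C$ by its spanning tree $T(C)$, which is still a monochromatic component of $\Gamma'$ (it contains edges), only no longer a cover; on such a component the basic step deletes nothing, so the edge count stops decreasing as soon as every monochromatic component is a tree. The terminal graph $\Delta$ is in general far from a single vertex: its label group is exactly the free part $F$ of the Kurosh decomposition (see $\Delta$ in Figure~\ref{Fig: KuroshDecomp1}), and Lemma~\ref{H is free product of} is only available for components that are covers, so you cannot keep stepping. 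You therefore still owe the statement that a finite $G$-based graph all of whose monochromatic components are trees determines a free group. That is Lemma~\ref{lem: last step}, but you cannot simply cite it here, because the paper derives that lemma \emph{from} Theorem~\ref{thm: freeness}; doing so would be circular. The non-circular fix is the normal-form argument embedded in the proof of Theorem~\ref{cor: alg_Kurosh}: a nonempty freely reduced closed path $p$ in $\Delta$ decomposes into maximal monochromatic subpaths $p_1\cdots p_m$; each $p_i$ lies in a tree component and is therefore unclosed, so $lab(p_i)\neq_G 1$ in its factor ($\Delta$ being $G$-based), whence $lab(p)\equiv lab(p_1)\cdots lab(p_m)$ is a normal word of positive syllable length and $lab(p)\neq_G 1$ by the Normal Form Theorem. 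Hence $Lab(\Delta,v_0)\cong\pi_1(\Delta,v_0)$ is free. With that base case supplied, your induction (or a single pass over $MCC(\Gamma(H))$ as in the proof of Theorem~\ref{cor: alg_Kurosh}) closes the argument.
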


In the case of  free products of finite groups such a connection
can be found even if the given graph is not a precover of $G$.

\begin{lem} \label{lem: last step}
 Let $(\Gamma,v_0)$ be a  finite pointed $G$-based  graph
well-labelled with $X^{\pm}$ such that $Lab(\Gamma,v_0)=H \leq G$.

If all monochromatic components of $\Gamma$ are trees then $H$ if
free.
\end{lem}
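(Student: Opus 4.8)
The plan is to reduce the statement to the Kurosh-type decomposition already proven in Lemma~\ref{H is free product of}, using induction on the number of monochromatic components of $\Gamma$. First I would observe that if $\Gamma$ has no edges at all, then $H=Lab(\Gamma,v_0)=\{1\}$, which is free (of rank $0$), establishing the base case. So assume $\Gamma$ has at least one edge, and hence at least one monochromatic component. Pick an $X_i$-monochromatic component $C$ of $\Gamma$; by hypothesis $C$ is a tree. The obstacle here is that Lemma~\ref{H is free product of} was stated under the assumption that $C$ is a \emph{cover} of $G_i$, not merely a $G$-based monochromatic component, so I cannot invoke it verbatim. The key point to make precise is that when $C$ is a tree we do not need the full strength of that lemma: since $C$ is a tree, $Loop(C,v)$ contains only the trivial (null-homotopic, hence $\equiv$-trivial after free reduction) loops, so $Lab(C,v)=\{1\}$, and the ``basic step'' construction degenerates.

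Concretely, I would run the basic step of Section~\ref{subsection: TheBasicStep} with this component $C$: choose a basepoint $v\in V(C)$, an approach path $P_v$ from $v_0$ to $v$ with $g_v\equiv lab(P_v)$, and form $\Gamma'$ by deleting from $\Gamma$ all edges of $C$ not in a spanning tree $T(C)$ of $C$. But $C$ is already a tree, so $T(C)=C$ and therefore $\Gamma'=\Gamma$ — except that this is not quite what I want, since I need to strictly decrease the number of components. The right move is instead to delete \emph{all} edges of $C$: set $E(\Gamma')=E(\Gamma)\setminus E(C)$, $V(\Gamma')=V(\Gamma)$. I need to check $\Gamma'$ is still connected and still contains $v_0$; connectivity holds because $C$ is a tree, so removing its edges (while keeping its vertices, which remain attached to the rest of $\Gamma$ through the bichromatic vertices of $C$) does not disconnect $\Gamma$ — more carefully, any path in $\Gamma$ through $C$ can be rerouted, but since $C$ is a tree each of its edges is a separating edge of $\Gamma$ only if it separates within $C$, and the ambient graph re-glues these pieces; I would spell this out by noting every vertex of $C$ lies on the same component of $\Gamma'$ as $v_0$ because $\Gamma$ is connected and the only way two vertices of $C$ could be separated in $\Gamma'$ is if the unique $C$-path between them were the only path in $\Gamma$, contradicting... actually the cleanest argument: after deleting a tree's edges, contract nothing; instead observe $\Gamma'$ has the same vertex set and $\Gamma = \Gamma' \cup C$ with $\Gamma'\cap C = V(C)$ (just the vertices, which in $C$ form a discrete edgeless graph that is still ``connected into'' $\Gamma'$ at each point), and a standard van Kampen / Bass–Serre bookkeeping gives $Lab(\Gamma,v_0) = Lab(\Gamma',v_0)$ since the deleted tree contributes no loops. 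This is the main obstacle and the place where I would be most careful: justifying both that $\Gamma'$ stays connected with $v_0\in V(\Gamma')$ and that deleting the tree $C$ does not change the subgroup, i.e. $Lab(\Gamma',v_0)=H$.

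Granting that, $(\Gamma',v_0)$ is again a finite pointed $G$-based graph, well-labelled with $X^\pm$, all of whose monochromatic components are trees (we removed one component and altered nothing else — the monochromatic components of $\Gamma'$ are exactly those of $\Gamma$ other than $C$, each still a tree), and $\Gamma'$ has strictly fewer monochromatic components than $\Gamma$. By the induction hypothesis, $Lab(\Gamma',v_0)$ is free; but $Lab(\Gamma',v_0)=H$, so $H$ is free, completing the induction. I would close by remarking that this is the degenerate case of the Kurosh decomposition of Lemma~\ref{H is free product of}: each component being a tree forces every conjugated free factor $g_v Lab(C,v) g_v^{-1}$ to be trivial, so the decomposition collapses to the free part $F$ alone.
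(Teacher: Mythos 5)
Your reduction breaks at exactly the step you flagged as the delicate one: deleting \emph{all} edges of a tree component $C$ does, in general, change the subgroup, and can also disconnect the graph. Although a tree contributes no loops of its own, it supplies the connecting segments between distinct bichromatic vertices of $C$, and loops of $\Gamma$ based at $v_0$ may pass \emph{through} $C$, entering at one vertex and leaving at another; such loops are destroyed when $E(C)$ is removed. Concretely, take $G=\mathbb{Z}_2\ast\mathbb{Z}_2=gp\langle a,b\mid a^2,b^2\rangle$ and let $\Gamma$ have two vertices $v_0,v_1$ joined by one edge labelled $a$ and one edge labelled $b$. This is a finite, connected, $G$-based, well-labelled graph whose two monochromatic components are trees, and $Lab(\Gamma,v_0)=\langle ab\rangle$ is infinite cyclic; but deleting the edges of the $a$-component leaves $Lab(\Gamma',v_0)=\{1\}$. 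So the identity $Lab(\Gamma',v_0)=H$ on which your induction rests is false, and the ``van Kampen bookkeeping'' cannot rescue it: $\Gamma'\cap C$ is a discrete set of possibly many vertices, and when that intersection is disconnected the edges of the tree $C$ do contribute to the loops of $\Gamma$ relative to those of $\Gamma'$. (This is precisely why the basic step of Section~\ref{subsection: TheBasicStep} removes only the edges \emph{outside} a spanning tree of $C$ --- and why, as you correctly observed, that step is vacuous when $C$ is already a tree.) The connectivity claim also fails, e.g.\ when $C$ has a leaf all of whose incident edges lie in $C$: that vertex becomes isolated.

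The paper argues in the opposite direction: instead of deleting the trees it completes them, gluing a copy of $Cayley(G_i)$ onto each $X_i$-monochromatic tree. By Lemma~\ref{lemma1.5} each finite well-labelled $X_i$-monochromatic tree embeds in $Cayley(G_i)$; by Lemma~\ref{glue_cayley graph to an edge} the gluing does not change the determined subgroup; after discarding redundant components the result is the reduced precover, hence equals $(\Gamma(H),u_0)$ by the uniqueness in Theorem~\ref{thm: properties of subgroup graphs}(2); and since every monochromatic component of that graph is a full copy of $Cayley(G_i)$, Theorem~\ref{thm: freeness} gives freeness. If you want to salvage a deletion-style induction, you must retain, for each tree component, enough of it to preserve all through-paths --- which is exactly the spanning-tree-preserving basic step, and that step never decreases the number of components when the component is already a tree.
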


To prove this lemma the following technical result from
\cite{m-foldings} is necessary.

\begin{lem} \label{glue_cayley graph to an edge}
Let $(\Gamma,v_0)$ be a  finite pointed graph well-labelled with
$X^{\pm}$ . Let $e$ be an edge of $\Gamma$ with $lab(e) \in
X_i^{\pm}$ ($i \in \{1,2\}$).

Let $(\Delta,u_0)$ be the graph obtained from  $\Gamma$ by gluing
a copy of $Cayley(G_i)$ along the edge $e$, where $u_0$ is the
image of $v_0 $ in $\Delta$.

Then $Lab(\Gamma,v_0)=Lab(\Delta,u_0)$.
\end{lem}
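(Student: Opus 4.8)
The plan is to establish the two inclusions $Lab(\Gamma,v_0) \subseteq Lab(\Delta,u_0)$ and $Lab(\Delta,u_0) \subseteq Lab(\Gamma,v_0)$ separately, the first being immediate and the second carrying all the content. First I would pin down the gluing precisely: writing $D$ for the glued copy of $Cayley(G_i)$, the identification is made along a single edge $\tilde e$ of $D$ carrying the label $lab(e)$, so that in $\Delta$ the two graphs overlap in exactly the edge $e$ together with its endpoints, i.e. $\Gamma \cap D = \{e, \iota(e), \tau(e)\}$, while every other vertex and edge of $D$ is new. If one insists that $\Delta$ be well-labelled one first folds; since Stallings' foldings do not change the determined subgroup, this does not affect $Lab(\Delta,u_0)$, so I may argue directly on the raw glued graph.

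Since $\Gamma$ is then a subgraph of $\Delta$ with $v_0 \mapsto u_0$, every loop of $\Gamma$ at $v_0$ is a loop of $\Delta$ at $u_0$, giving $Lab(\Gamma,v_0) \subseteq Lab(\Delta,u_0)$ at once. For the reverse inclusion I would take an arbitrary loop $p \in Loop(\Delta,u_0)$ and cut it at the only two vertices through which it can pass between $\Gamma$ and $D$, namely $\iota(e)$ and $\tau(e)$. This yields a decomposition $p = p_0 r_1 p_1 \cdots r_m p_m$ in which each $p_j$ is a path in $\Gamma$ and each $r_j$ is a path in $D$ whose endpoints lie in $\{\iota(e),\tau(e)\}$.

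The crux is to evaluate the label of each excursion $r_j$ using that $D = Cayley(G_i)$ is a \emph{cover} of $G_i$: its vertices are the elements of $G_i$, an edge labelled $y$ runs from $g$ to $gy$, and a path from $g$ to $h$ reads a word equal in $G_i$ to $g^{-1}h$. Writing $a$ and $ax$ for the elements of $G_i$ identified with $\iota(e)$ and $\tau(e)$ (where $x \equiv lab(e)$), a four-case check shows that $lab(r_j) =_{G_i} 1$ (hence $=_G 1$) when $r_j$ returns to its starting boundary vertex, and $lab(r_j) =_{G_i} x$ (resp. $x^{-1}$) when it runs from $\iota(e)$ to $\tau(e)$ (resp. from $\tau(e)$ to $\iota(e)$). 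Accordingly I would replace each $r_j$ by a path of $\Gamma$ with the same endpoints and the same $G$-label: the empty path in the first case, the edge $e$ in the second, and $\overline{e}$ in the third. Performing all these replacements turns $p$ into a loop $p'$ at $u_0$ lying entirely in $\Gamma$ with $lab(p') =_G lab(p)$, whence $lab(p) \in Lab(\Gamma,v_0)$ and the reverse inclusion follows.

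The main obstacle is not any single computation but setting up the decomposition cleanly: I must be sure that the boundary between $\Gamma$ and the glued copy is exactly $\{\iota(e),\tau(e)\}$, so that every maximal $D$-subpath genuinely begins and ends at one of these two vertices, and I must handle the degenerate situations (the loop $p$ never entering $D$, or $v_0$ itself coinciding with $\iota(e)$ or $\tau(e)$) without disrupting the concatenation of the $p_j$. Once the overlap is identified and the cover property of $Cayley(G_i)$ is invoked, the replacement argument is forced and the proof closes.
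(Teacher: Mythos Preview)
Your argument is correct. Note, however, that the paper does not supply its own proof of this lemma: it is introduced as ``the following technical result from \cite{m-foldings}'' and then immediately used (without proof) in the proof of Lemma~\ref{lem: last step}. So there is no in-paper proof to compare against.

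That said, your excursion-replacement argument is exactly the natural one and is what one would expect to find in the cited source. The only points worth tightening are the ones you already flag: make explicit that the raw glued graph has $\Gamma \cap D$ equal to the edge $e$ together with its two endpoints (so every maximal $D$-segment of a loop at $u_0$ necessarily starts and ends in $\{\iota(e),\tau(e)\}$), and note that since the edge $e$ itself lies in both $\Gamma$ and $D$ the decomposition $p = p_0 r_1 p_1 \cdots r_m p_m$ is not literally unique, but any choice works. The four-case label computation using the cover property of $Cayley(G_i)$ is correct, and the replacement of each $r_j$ by the empty path, $e$, or $\overline{e}$ yields a loop in $\Gamma$ with the same $G$-value, closing the reverse inclusion.
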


\begin{proof}[Proof of Lemma \ref{lem: last step}] \
By Lemma \ref{lemma1.5}, any finite well-labelled
$X_i$-monochromatic tree embeds into $Cayley(G_i)$ ($i \in
\{1,2\}$). Thus  the graph $(\Gamma,v_0)$ embeds into the graph
$(\Gamma',v'_0)$ obtained  by gluing copies of $Cayley(G_i)$ to
each $X_i$-monochromatic tree of $\Gamma$ ($v_0'$ is the inherited
base point). Moreover, the resulting graph $(\Gamma',v'_0)$ is a
precover of $G$.

By Lemma~\ref{glue_cayley graph to an edge},
$Lab(\Gamma',v'_0)=Lab(\Gamma,v_0)=H$. If $\Gamma'$ is not a
reduced precover of $G$ then it can be turned to one by removing
redundant components. As is well known from \cite{m-foldings},
this procedure is finite and does not change the determined
subgroup. Therefore, without loss of generality, we assume that
$(\Gamma',v'_0)$ is a reduced precover of $G$.

Hence, by Theorem~\ref{thm: properties of subgroup graphs} (2),
$(\Gamma',v'_0)=(\Gamma(H),u_0)$. Thus, by Theorem~\ref{thm:
freeness}, $H$ is a free group.
\end{proof}


Let $\Gamma$ be a finite $G$-based graph well-labelled with
$X^{\pm}$. We set \fbox{$MCC(\Gamma)$} to be the list of all
Monochromatic Components of $\Gamma$ which are Covers of either
$G_1$ or $G_2$. Since the graph $\Gamma$ is finite, the set
$MCC(\Gamma)$ is finite as well.

%

\begin{thm} \label{cor: alg_Kurosh}
 Let $h_1, \ldots, h_n \in G$. Then there exists an
algorithm which computes a Kurosh decomposition of the subgroup
$H=\langle h_1, \ldots, h_n \rangle \leq G.$
\end{thm}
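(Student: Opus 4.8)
The plan is to combine Theorem~\ref{thm: properties of subgroup graphs}, the basic step (Lemma~\ref{H is free product of}) and the freeness criterion (Lemma~\ref{lem: last step}) into an iterative procedure. First I would run the generalized Stallings' folding algorithm on $h_1,\dots,h_n$ to produce the subgroup graph $(\Gamma(H),v_0)$, which by Theorem~\ref{thm: properties of subgroup graphs}(1) satisfies $Lab(\Gamma(H),v_0)=H$ and is a reduced precover computable in time $O(m^2)$. Then I would compute $MCC(\Gamma(H))$, the list of monochromatic components that are covers of $G_1$ or $G_2$. If this list is empty, every monochromatic component is a tree (it cannot be a cover and, being a subgraph of a subgroup graph well-labelled with $X^\pm$, each monochromatic component that is not a cover must be simply connected — this is where I would invoke the structure of reduced precovers together with Theorem~\ref{thm: freeness}), so by Lemma~\ref{lem: last step} $H$ is free; the algorithm outputs $H=F$ with $F$ free of rank equal to the first Betti number of $\Gamma(H)$ (equivalently $|E(\Gamma(H))|/2 - |V(\Gamma(H))| + 1$), and we are done.

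**If $MCC(\Gamma(H))\neq\emptyset$**, I would pick one component $C\in MCC(\Gamma(H))$, choose a basepoint $v\in V(C)$ and an approach path $P_v$ from $v_0$ to $v$ satisfying the normalization $P_{vm}\cap C=\{v\}$ as arranged in Section~\ref{subsection: TheBasicStep}, and set $g_v\equiv lab(P_v)$. Applying Lemma~\ref{H is free product of}(iii) gives
\begin{equation*}
H = g_v\,Lab(C,v)\,g_v^{-1} \ast Lab(\Gamma',v_0),
\end{equation*}
where $\Gamma'$ is obtained from $\Gamma(H)$ by deleting the edges of $C$ not in a fixed spanning tree $T(C)$. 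Since $C$ is a cover of $G_i$, by Lemma~\ref{lemma1.5} $Lab(C,v)$ is a subgroup $H_j$ of the free factor $G_i$, realized concretely as $Lab(C,v)$ — a finite group we can read off from $C$ — and $g_v$ is read off as the label of $P_v$. I would record the conjugate factor $g_v H_j g_v^{-1}$ and recurse on $(\Gamma',v_0)$, which is again a finite pointed $G$-based graph with $Lab(\Gamma',v_0)$ of strictly smaller size (one fewer cover-component, since collapsing $C$ to its tree destroys exactly that component and creates no new cover — by Remark~\ref{remark: morphism of precovers} type arguments no monochromatic component of $\Gamma'$ that was a tree becomes a cover). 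Termination follows because $|MCC|$ strictly decreases at each stage; when it reaches $0$ the base case applies and contributes the free part $F$. Collecting the accumulated conjugated factors with the final free group yields
\begin{equation*}
H = F \ast \bigl(\ast_j\, g_j H_j g_j^{-1}\bigr),
\end{equation*}
the desired Kurosh decomposition.

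**For correctness** I would note that associativity of the free product lets the iterated splittings be assembled into a single decomposition, and that each $Lab(C,v)$ is genuinely (conjugate into) a free factor by Lemma~\ref{lemma1.5}, so the output has the exact form demanded by the Kurosh Subgroup Theorem. **For the complexity claim** — that the algorithm is quadratic in the input size $m$ — I would observe that the folding step is $O(m^2)$ by Theorem~\ref{thm: properties of subgroup graphs}(3), that $|V(\Gamma(H))|$ and $|E(\Gamma(H))|$ are $O(m)$, that there are $O(m)$ monochromatic components hence $O(m)$ iterations, and that each iteration (finding a spanning tree, an approach path, and deleting edges) costs $O(m)$; the total is $O(m^2)$.

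**The main obstacle** I expect is the base case, specifically verifying that when $MCC(\Gamma(H))=\emptyset$ the hypotheses of Lemma~\ref{lem: last step} hold — i.e.\ that every monochromatic component really is a tree rather than some other non-cover graph. This needs the reduced-precover structure of $\Gamma(H)$: a monochromatic component is a cover of $G_i$ iff it is $X_i^{\pm}$-saturated, and one must argue that a non-saturated monochromatic component in the graph produced by the folding algorithm must in fact be simply connected. A secondary subtlety is bookkeeping the conjugators $g_j$ consistently across the recursion, since the basepoint and the ambient graph change at each step, but the approach-path formalism of Section~\ref{subsection: TheBasicStep} handles this cleanly: the conjugator produced at a later stage is a label of a path lying in the current $\Gamma'$, which is a subgraph of the original $\Gamma(H)$, so all $g_j$ are words readable in $\Gamma(H)$ itself.
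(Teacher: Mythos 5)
Your proposal is correct and follows essentially the same route as the paper: build $(\Gamma(H),v_0)$, iteratively apply the basic step of Lemma~\ref{H is free product of} to strip each cover component down to a spanning tree, and finish with Lemma~\ref{lem: last step} and the standard free-basis algorithm on the resulting graph $\Delta$. The base-case obstacle you flag is in fact moot: since $\Gamma(H)$ is a reduced precover, \emph{every} monochromatic component is a cover of some $G_i$, so the non-cover components arising during the recursion are exactly the leftover spanning trees, which are trees by construction.
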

\begin{proof}
First we construct the subgroup graph $(\Gamma(H),v_0)$ using the
generalized Stallings algorithm (see the Appendix).

Then we iteratively apply the basic step construction described in
Section~\ref{subsection: TheBasicStep} to the monochromatic
components of $\Gamma(H)$.  Since $k=|MCC(\Gamma(H))| < \infty$
this process is finite. We start from a monochromatic component
$C_0$ of $\Gamma(H)$ such that $v_0 \in V(C_0)$. We take $v_0$ as
the basepoint of $C_0$ and let the approach path be empty. This
yields the graph $\Gamma'_1$ with $MCC(\Gamma_1')=MCC(\Gamma(H))
\setminus \{C_0\}$.

Let $\Gamma'_i$ be the graph obtained after $(i-1)$ consequence
applications of the basic step to the graphs $\Gamma(H),
\Gamma'_1, \ldots \Gamma'_{i-1}$ and the monochromatic components
$(C_0,v_0), (C_1,v_1), \ldots, (C_{i-1},v_{i-1})$, respectively.
Thus  $MCC(\Gamma_i')=MCC(\Gamma(H)) \setminus \{C_0, (C_1,v_1),
\ldots, (C_{i-1},v_{i-1})\}$.

Our next application of the basic step is to the graph $\Gamma'_i$
and a monochromatic component $C_i \in MCC(\Gamma'_i)$ such that
$VB(C_{i-1}) \cap VB(C_i) \neq \emptyset$. We pick a vertex $v_i
\in VB(C_{i-1}) \cap VB(C_i)$ to be the base point of $C_i$ and
choose the appropriate approach path $P_{v_i}$.

After $k=|MCC(\Gamma(H))|$ steps this process gives a finite graph
$(\Delta,v_0)$ whose monochromatic components are trees, that is
$MCC(\Delta)=\emptyset$ and $Lab(\Delta,v_0)$ is a free group, by
Lemma~\ref{lem: last step}.

Lemma~\ref{H is free product of} yields the following Kurosh
decomposition of $H$.
$$H=\big( \ast_{0 \leq i \leq (k-1)} lab(P_{v_i})Lab(C_i, v_i)lab(P_{v_i})^{-1}
\big) \ast Lab(\Delta,v_0),$$ where $F=Lab(\Delta,v_0)$ is a free
group.

Since the factors $G_1$ and $G_2$ are finite as well as all the
monochromatic components $C_i$ ($0 \leq i \leq k-1$), which are
their covers, it is possible to compute $Lab(C_i, v_i)$ applying,
for instance, the well-known Reidemeister-Schreier procedure
(p.102 in \cite{l_s}).

In order to find a free basis $S$ of $F=Lab(\Delta,v_0)$, we
proceed according to the well-known algorithm for subgroups of
free groups \cite{kap-m, mar_meak, stal} which computes a free
basis defined by a labelled graph. Thus
$$S=\{lab(p_{\iota(e)}e \overline{p_{\tau(e)}}) \; | \; e \in E(\Delta)^+ \setminus
T(\Delta)\},$$ where $T(\Delta)$ is a  spanning tree  of $\Delta$,
and  $p_v$ is the unique freely reduced path in $T$ with
$\iota(p_v)=v_0$ and $\tau(p_v)=v$.


Thus $lab_{FG(X)}(\Delta,v_0)=FG(S)$, while
$Lab(\Delta,v_0)=FG(S)/FG(S) \cap N$, where $N$ is the normal
closure of $R$ in $FG(X)$.

However $FG(S) \cap N=\{1\}$. Indeed, let $1 \neq w \in FG(S) \cap
N$. Without loss of generality we can assume that $w$ is a freely
reduced word.

Thus there exists a reduced path $p$ in $(\Delta,v_0)$ closed at
$v_0$ with $\iota(p)=\tau(p)=v_0$ and $lab(p) \equiv w$. Let
$p=p_1 \cdots p_m$ be a decomposition of $p$ into maximal
monochromatic paths. By the construction of $(\Delta,v_0)$, all
its monochromatic components are trees, therefore all the paths
$p_i$ ($1 \leq i \leq m$) are unclosed and hence $lab(p_i) \neq _G
1$. Thus $lab(p) \equiv lab(p_1) \cdots lab(p_m)$ is a normal word
in $G$. Therefore, by the Normal Form Theorem for Free Products,
$w \equiv  lab(p) \neq_G 1$, that is $w \not\in N$. Thus
$Lab(\Delta,v_0)=FG(S)$.

Hence $$H=\big( \ast_{1 \leq j \leq m} g_jH_jg_j^{-1} \big) \ast
FG(S),$$
where $H_j=Lab(C_i, v_i) \neq \{1\}$ and $g_j \equiv
lab(P_{v_i})$.

\end{proof}

\begin{remark} \label{cor: group presentation}
{\rm As an immediate consequence of the above computation  the
group presentation of $H$ is obtained even if $[G:H]=\infty$ and
the Reidemeister-Schreier process doesn't work.

Indeed, since the subgroups $H_j$ have  finite index in the free
factors of $G$, their group presentation $H_j=gp\langle Y_j \; |
\; R_j \rangle$ as a subgroup of a free factor can be  computed
using Reidemeister-Schreier process. Thus
$$H=gp\langle S, \ g_jY_jg_j^{-1} \; | \;
g_jR_jg_j^{-1}\rangle.$$}
\e

\end{remark}


\subsection*{Complexity Issues}

 It should be stressed that in contrast with papers that establish
the exploration of the algorithms complexity as their primary goal
(see, for instance, \cite{generic-case, average-case, tuikan}), we
do it rapidly (sketchy) viewing in its analysis a way to emphasize
the effectiveness  of our graph theoretical approach.

The main purpose of the complexity analysis below is to estimate
our graph theoretical methods applied to read off a Kurosh
decomposition of a subgroup from its subgroup graph.

To this end we assume that the free product of finite groups
$G=G_1 \ast G_2$ is given via ($\ast$) and ($\ast \ast$),
respectively, and that this presentation is not a part of the
input. We assume as well that the Cayley graphs and all the
relative Cayley graphs of the free factors $G_1$ and $G_2$ are
given for ``free'' (see the Appendix for the discussion on given
data and input).  These assumptions allow us to be concentrated
only on the estimation of the algorithm presented along with the
proof of Theorem~\ref{cor: alg_Kurosh}.

Indeed, if the group presentations of the free factors $G_1$ and
$G_2$ are a part of the input (the \emph{uniform version} of the
algorithm) then we have to build the groups $G_1$ and $G_2$ (that
is to construct their Cayley graphs and relative Cayley graphs).

Since the groups $G_1$ and $G_2$ are finite,  the Todd-Coxeter
algorithm and the Knuth Bendix algorithm are suitable \cite{l_s}
for these purposes. Then the complexity of the construction
depends on the group presentation of $G_1$ and $G_2$ we have: it
could be even exponential in the size of the presentation
\cite{cdhw73}. Therefore the above algorithm  with these
additional constructions could take time exponential in the size
of the input.
%

\subsection*{Complexity Analysis }

By Theorem~\ref{thm: properties of subgroup graphs} $(3)$,  the
construction of $\Gamma(H)$ takes $O(m^2)$, where $m$ is the sum
of lengths of the input subgroup generators $h_1, \ldots, h_n$.

The detecting of monochromatic components in the constructed graph
takes $ \: O(|E(\Gamma(H))|) \: $, that is  $O(m)$. Since all the
essential information about  $G_1$ and  $G_2$ is given and it is
not a part of the input, verifications concerning a particular
monochromatic component of $\Gamma(H)$,  takes $O(1)$.

Since the construction of a spanning tree in a monochromatic
component $C$ of $\Gamma(H)$ takes $ \: O(|E(C)|) \: $, this
procedure applied to all monochromatic components of $\Gamma(H)$
takes $ \: O(|E(\Gamma(H))|) \: $. Therefore to construct the
graph $\Delta$ from $\Gamma(H)$ takes $ \: O(|E(\Gamma(H))|) \: $,
that is $O(m)$.

The construction of the free basis of $F=Lab(\Delta,v_0)$ in the
described way takes $O(|E(\Delta)|^2)$, by \cite{b-m-m-w}. Since
$|E(\Delta)|<|E(\Gamma(H))|$, the above construction takes $ \:
O(|E(\Gamma(H))|^2) \: $, that is  $O(m^2)$.

Therefore the complexity of the algorithm given along with the
proof of Corollary \ref{cor: alg_Kurosh} equals $O(m^2)$.

If the subgroup $H$ is given by the graph $(\Gamma(H),v_0)$ and
not by a finite set of subgroup generators, then the complexity is
$O(|E(\Gamma(H))|^2)$. Thus in both cases the algorithm is
quadratic in the size of the input.


\begin{figure}[!h]
\begin{center}
\psfragscanon
\psfrag{A0 }[][]{{$\Gamma(H)$}}
\psfrag{A1 }[][]{{$\Gamma'_1$}} \psfrag{A2 }[][]{{$\Gamma'_2$}}
\psfrag{A3 }[][]{{$\Gamma'_3$}} \psfrag{A4 }[][]{{$\Gamma'_4$}}
\psfrag{A5 }[][]{{$\Delta$}}
\psfrag{x }[][]{$a$} \psfrag{y }[][]{$b$}

\psfrag{v0 }[][]{\small $v_0$}

\psfrag{v1 }[][]{\small $v_1$} \psfrag{v2 }[][]{\small $v_2$}
\psfrag{v3 }[][]{\small $v_3$} \psfrag{v4 }[][]{\small $v_4$}
\psfrag{C0 }[][]{\small $C_0$}

\psfrag{C1 }[][]{\small $C_1$} \psfrag{C2 }[][]{\small $C_2$}
\psfrag{C3 }[][]{\small $C_3$} \psfrag{C4 }[][]{\small $C_4$}

\includegraphics[width=0.8\textwidth]{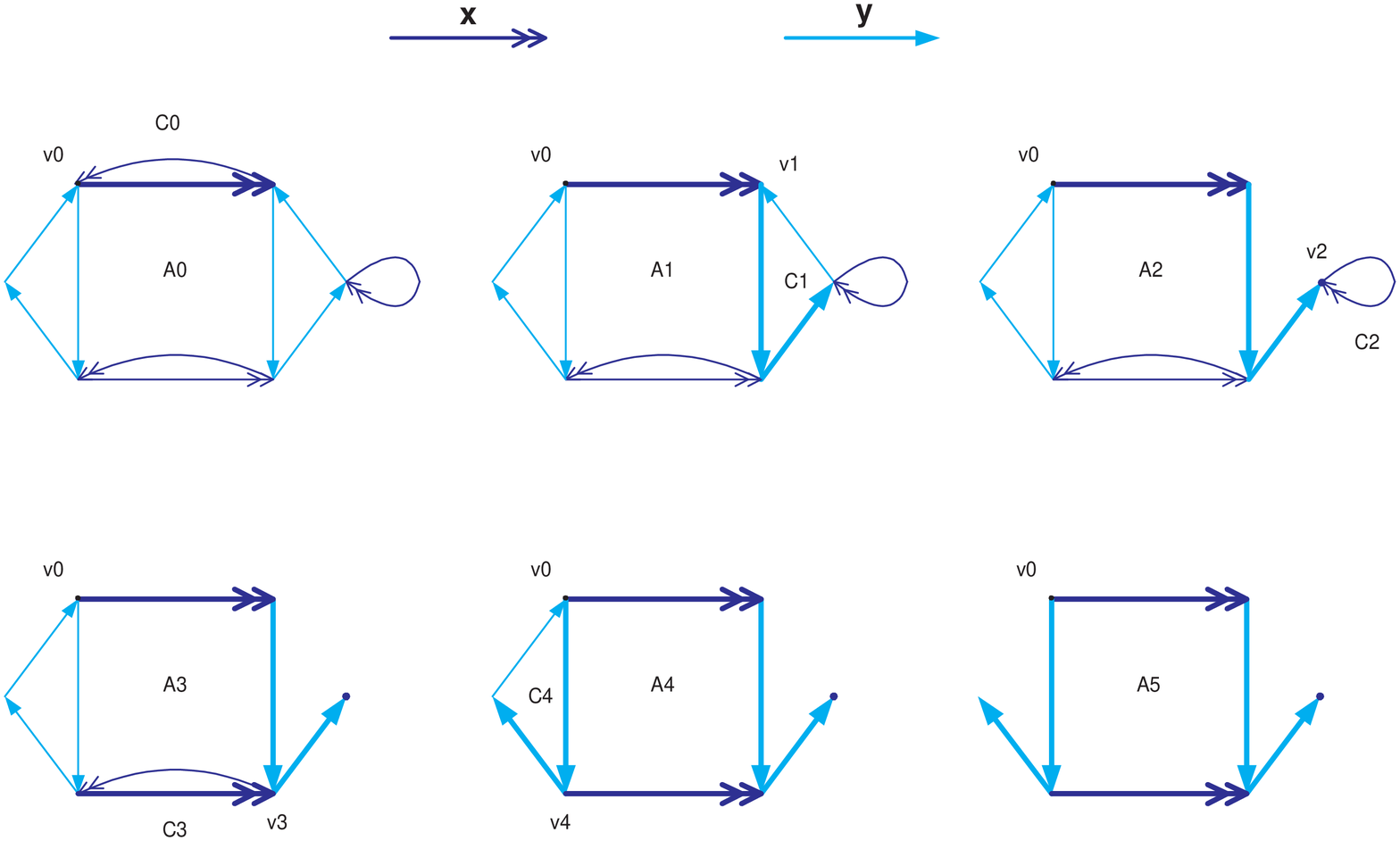}


\caption[An example of our computation of a Kurosh Decomposition]{
\footnotesize A computation of a Kurosh Decomposition of $H$ from
 $\Gamma(H)$. The bold edges correspond to  spanning
 trees of the appropriate monochromatic components.
 \label{Fig: KuroshDecomp1}}
\end{center}
\end{figure}
\begin{ex}
{\rm

Let  $G=Z_2 \ast Z_3=gp\langle a,b \; | \; a^2, b^3\rangle \simeq
PSL_2(Z)$.

Let $H=\langle aba^{-1}b^{-1}, (ba)^3 \rangle \leq G$. We use the
subgroup graph $\Gamma(H)$ constructed by the generalized
Stallings' algorithm (see Example~\ref{ex: free product
construction} and Figure \ref{fig:ConstructionFreePr} for the
precise construction) to read off a Kurosh decomposition of $H$.
The reading procedure described along with the proof of
Theorem~\ref{cor: alg_Kurosh}  is illustrated step by step on
Figure \ref{Fig: KuroshDecomp1}.

The computation of a group presentation of $H$, according to
Corollary~\ref{cor: group presentation}, is presented below.
\begin{eqnarray}
H & = & Lab(\Gamma'_1, v_0) \ast Lab(C_0,v_0) \nonumber \\
& = & Lab(\Gamma'_2, v_0) \ast Lab(C_1,v_1)  \nonumber \\
& = & Lab(\Gamma'_3, v_0) \ast Lab(C_2,v_2)  \nonumber \\
& = & Lab(\Gamma'_4, v_0) \ast Lab(C_3,v_3) \ast (ab^2)\langle a \rangle (ab^2)^{-1} \nonumber \\
& = & Lab(\Delta, v_0) \ast Lab(C_4,v_4) \ast (ab^2)\langle a \rangle (ab^2)^{-1}  \nonumber \\
& = & Lab(\Delta, v_0) \ast Lab(C_4,v_4) \ast (ab^2)\langle a
\rangle (ab^2)^{-1} \nonumber \\
& = & FG(aba^{-1}b{-1})  \ast (ab^2)\langle a \rangle (ab^2)^{-1}.
\nonumber
\end{eqnarray}

Let $e_1=aba^{-1}b^{-1}, \ e_2=(ab^2) a (ab^2)^{-1}$.
Thus $H=gp\langle e_1,e_2 \; | \; e_1, e_2^2 \rangle$.

 } \e
\end{ex}

{ \ }


\appendix
\section{}
\label{subsection:The Adapted Algorithm}

Let $G=G_1 \ast G_2$. Obviously, $G=G_1 \ast_{\{1\}} G_2$. The
assumption that the amalgamated subgroup is trivial simplifies the
algorithm from \cite{m-foldings}, making the  fourth and the sixth
steps to be irrelevant.
Thus the restricted algorithm  takes the following form.

{ \ }

\begin{conv}
{\rm We follow the notation of Grunschlag \cite{grunschlag},
distinguishing between the ``\emph{input}'' and the ``\emph{given
data}'', the information that can be used by the algorithm
\emph{``for free''}, that is it does not affect the complexity
issues.} \e
\end{conv}

{ \ }

\begin{center}
\large{\emph{\underline{\textbf{Algorithm}}}}
\end{center}

\smallskip

\begin{description}
\item[Given] Finite groups $G_1$, $G_2$ and the free product
$G=G_1 \ast G_2$ given via ($\ast$) and ($\ast \ast$),
respectively.

We assume that the Cayley graphs and all the relative Cayley
graphs of the free factors are given.
\item[Input]  A finite set $\{ g_1, \cdots, g_n \} \subseteq G$.
\item[Output] A finite graph $\Gamma(H)$ with a basepoint $v_0$
which is a reduced precover of $G$ and the following holds
\begin{itemize}
 \item
$Lab(\Gamma(H),v_0)=_{G} H$;
 \item $H=\langle g_1, \cdots, g_n \rangle$;
 \item a normal word $w$ is in $H$ if and only if
  there is a loop (at $v_0$) in $\Gamma(H)$
labelled by the word $w$.
 \end{itemize}
\item[Notation] $\Gamma_i$ is the graph obtained after the
execution of the $i$-th step.

%
%
\medskip

    \item[\underline{Step1}] Construct a based set of $n$ loops around a common distinguished
vertex $v_0$, each labelled by a generator of $H$;
    \item[\underline{Step2}] Iteratively fold edges and cut hairs;
 \item[\underline{Step3}] { \ }\\
\texttt{For} { \ } each $X_i$-monochromatic component $C$ of
$\Gamma_2$ ($i=1,2$) { \ } \texttt{Do} \\
\texttt{Begin}\\
    pick an edge $e \in E(C)$; \\
    glue a copy  of $Cayley(G_i)$   on $e$ via identifying $ 1_{G_i} $  with $\iota(e)$ \\
    and identifying the two copies of $e$ in $Cayley(G_i)$ and in $\Gamma_2$; \\
    \texttt{If}  { \ } necessary  { \ } \texttt{Then} { \ } iteratively fold
    edges; \\
 \texttt{End;}

 \item[\underline{Step4}] { \ } \\
%
Reduce  $\Gamma_3$ by iteratively removing all \emph{redundant}
 $X_i$-monochromatic components $C$ which are
\begin{itemize}
 \item $(C,\vartheta)$ is isomorphic to $Cayley(G_i, 1)$;
 \item  $VB(C)=\{\vartheta\}$;
 \item  $v_0  \not\in VM_i(C)$.
\end{itemize}

Let $\Gamma$ be the resulting graph;\\

\texttt{If}  { \ }
$VB(\Gamma)=\emptyset$ and $(\Gamma,v_0)$ is isomorphic to $Cayley(G_i, 1_{G_i})$ \\
\texttt{Then} { \ } we set $V(\Gamma(H))=\{v_0\}$ and
$E(\Gamma(H))=\emptyset$.\\
\texttt{Else} { \ } we set $\Gamma(H)=\Gamma$.

\end{description}

{ \ }


\begin{remark} \label{stal-mar-meak-kap-m}
{\rm The first two steps of the above algorithm correspond
precisely to the Stallings' folding algorithm for finitely
generated subgroups of free groups \cite{stal, mar_meak, kap-m}.}
\e
\end{remark}

\begin{figure}[!htb]
\begin{center}
\psfragscanon \psfrag{A }[][]{{$\Gamma(H)$}}
\psfrag{A1 }[][]{{$\Gamma_1$}} \psfrag{A2 }[][]{{$\Gamma_2$}}
\psfrag{A3 }[][]{{$\Gamma'_3$ }}
\psfrag{a }[][]{\footnotesize $a$} \psfrag{b }[][]{\footnotesize
$b$}

\psfrag{v0 }[][]{\small $v_0$}

\psfrag{x }[][]{$a$} \psfrag{y }[][]{$b$}

\psfrag{v1 }[][]{\small $v_1$} \psfrag{v2 }[][]{\small $v_2$}
\psfrag{v3 }[][]{\small $v_3$} \psfrag{v4 }[][]{\small $v_4$}
\psfrag{C0 }[][]{\small $C_0$}

\psfrag{C1 }[][]{\small $C_1$} \psfrag{C2 }[][]{\small $C_2$}
\psfrag{C3 }[][]{\small $C_3$} \psfrag{C4 }[][]{\small $C_4$}

\includegraphics[width=\textwidth]{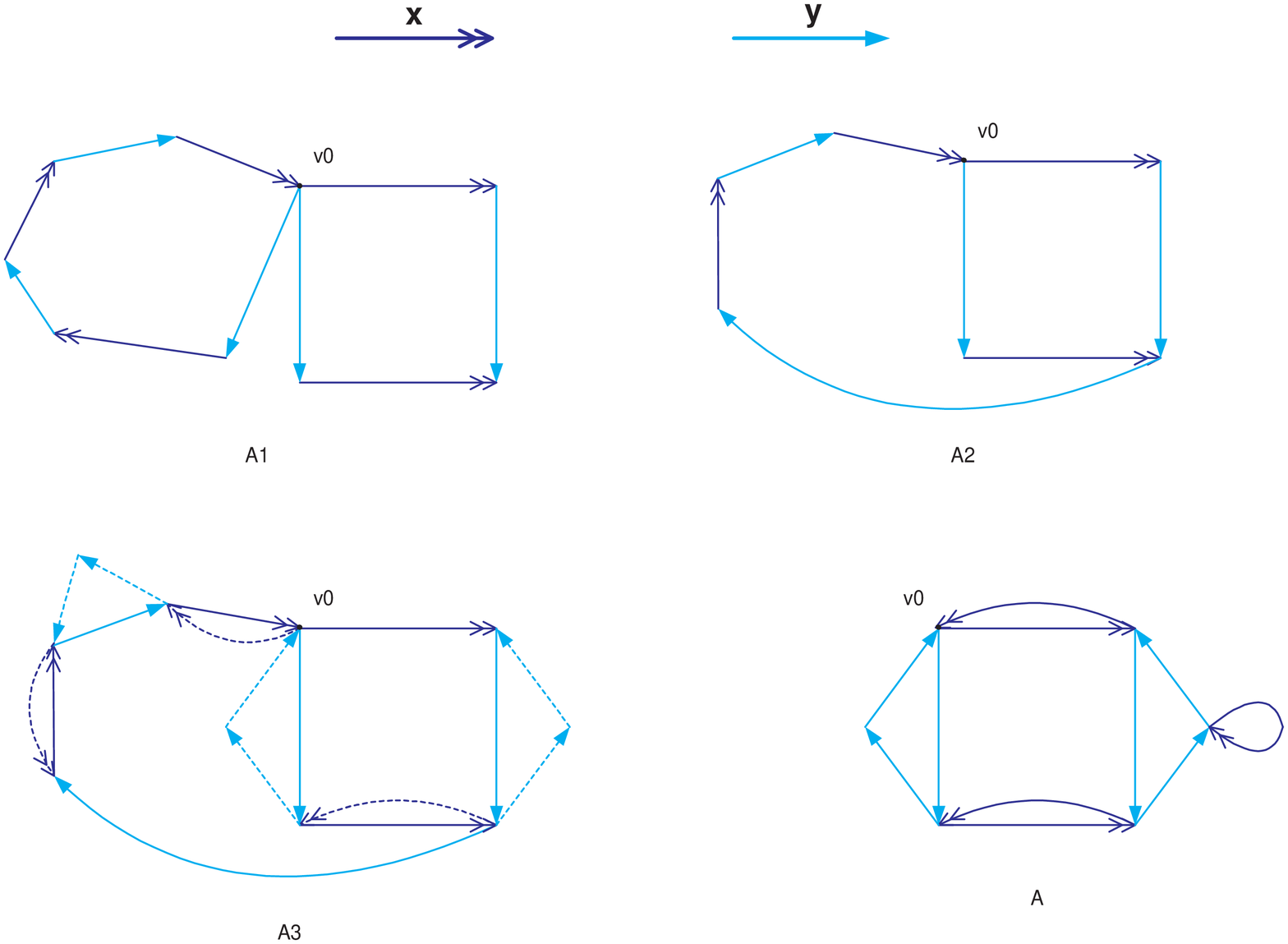}

{ \ } \\

\caption{ \footnotesize The graph $\Gamma'_3$ is an intermediate
graph of the Step 3 obtained after the gluing operations before
the foldings are done. \label{fig:ConstructionFreePr}}
\end{center}
\end{figure}

\begin{ex} \label{ex: free product construction}
{\rm Let  $G=Z_2 \ast Z_3=gp\langle a,b \; | \; a^2, b^3\rangle
\simeq PSL_2(Z)$.

Let $H=\langle aba^{-1}b^{-1}, (ba)^3 \rangle \leq G$. The
construction of $\Gamma(H)$ by the generalized Stallings' folding
algorithm  is presented on Figure \ref{fig:ConstructionFreePr}.}\e

\end{ex}


\end{document}